\newcommand{\calA}{\mathcal{A}}
\newcommand{\infeas}{\textsc{Infeasible}\xspace}
\newcommand{\feas}{\textsc{Feasible}\xspace}
\newcommand{\feasbar}{\overline{\textsc{Feas}}}
\newcommand{\err}{err}
\newcommand{\cI}{\mathcal{I}}
\newcommand{\F}{\mathcal{F}}
\newcommand{\remove}[1]{}
\newcommand{\pk}[1]{\textcolor{teal}{#1}}
\icmltitlerunning{A Universal Transfer Theorem for Convex Optimization}
\begin{document}

\twocolumn[
\icmltitle{A Universal Transfer Theorem for Convex Optimization Algorithms \texorpdfstring{\\}{}Using Inexact First-order Oracles}



\icmlsetsymbol{equal}{*}

\begin{icmlauthorlist}
\icmlauthor{Phillip Kerger}{equal,jhu}
\icmlauthor{Marco Molinaro}{equal,msr,puc}
\icmlauthor{Hongyi Jiang}{cornell}
\icmlauthor{Amitabh Basu}{jhu}
\end{icmlauthorlist}

\icmlaffiliation{jhu}{Department of Applied Mathematics and Statistics, Johns Hopkins University, Baltimore, USA}
\icmlaffiliation{cornell}{Department of Civil and Environmental Engineering, Cornell University, Ithaca, USA}
\icmlaffiliation{msr}{Microsoft Research, Redmond, USA}
\icmlaffiliation{puc}{Department of Computer Science, PUC-Rio, Rio de Janeiro, Brazil}

\icmlcorrespondingauthor{Amitabh Basu,}{abasu9@jhu.edu}
\icmlcorrespondingauthor{Hongyi Jiang}{hj348@cornell.edu}
\icmlcorrespondingauthor{Phillip Kerger}{pkerger@jhu.edu}
\icmlcorrespondingauthor{Marco Molinaro}{mmolinaro@microsoft.com}

\icmlkeywords{Convex Optimization, First-order methods, inexact information, approximate information, Smooth optimization}

\vskip 0.3in
]



\printAffiliationsAndNotice{\icmlEqualContribution} 

\begin{abstract}
Given {\em any} algorithm for convex optimization that uses exact first-order information (i.e., function values and subgradients), we show how to use such an algorithm to solve the problem with access to \emph{inexact} first-order information. This is done in a ``black-box'' manner without knowledge of the internal workings of the algorithm. This complements previous work that
considers the performance of specific algorithms like (accelerated) gradient descent with inexact information. In particular, our results apply to a wider range of algorithms beyond variants of gradient descent, e.g., projection-free methods, cutting-plane methods, or any other first-order methods formulated in the future. Further, they also apply to algorithms that handle structured nonconvexities like mixed-integer decision variables.
\end{abstract}

\section{Introduction}\label{sec:introduction}

Optimization is a core tool for almost any learning or estimation problem. Such problems are very often approached by setting up an optimization problem whose decision variables model the entity to be estimated, and whose objective and constraints are defined by the observed data combined with structural insights into the inference problem. Algorithms for any sufficiently general class of relevant optimization problems in such settings need to collect information about the particular instance by making (adaptive) queries about the objective  before they can report a good solution. In this paper, we focus on the following important class of optimization problems {\em over a fixed ground set $X \subseteq \R^d$}
\begin{equation}\label{eq:main-prob}
    \min\{f(x) : x \in X\},
\end{equation}
where $f:\R^d \to \R$ is a (possibly nonsmooth) convex function. When the underlying ground set $X$ is all of $\R^d$ or some fixed convex subset,~\eqref{eq:main-prob} is the classical convex optimization problem. In this paper, we allow $X$ to be more general and to be used to model some known nonconvexity, e.g. integrality constraints by setting $X= C \cap (\Z^{d_1} \times \R^{d_2})$ with $d_1 + d_2 = d$, where $C\subseteq \R^d$ is a fixed convex set. From an algorithmic perspective, the setup is that the algorithm has complete knowledge of what $X$ is, but does not {\em a priori} know $f$ and must collect information via queries.
A standard model for accessing the function $f$ is through so-called {\em first-order oracles}. At any point during its execution, the algorithm can request the function value and the (sub)gradient of $f$ at any point $\bar{x} \in \R^d$. 

Given access to such oracles, a \emph{first-order algorithm} makes adaptive queries to this oracle 
and, after it judges that it has collected enough information about $f$, it reports a solution with certain guarantees. A long line of research has gone into understanding exactly how many queries are needed to solve different classes of problems (with different sets of assumptions on $f$ and $X$), with tight upper and lower bounds on the  query complexity (a.k.a. {\em oracle} or {\em information complexity}) known in the literature; see~\cite{Nesterov-Book04,bubeck2014convex,nemirovski1994efficient,basu2021complexity,basu2023information} for expositions of these results.

A natural question that arises in this context is what happens if the response of the oracle is not exact, but approximate (with possibly desired accuracy). For example, 
the response of the oracle might be itself a solution to another computational problem which is solved only approximately, which happens when using function smoothing \cite{Nesterov2005_Smooth_minimization_of_nonsmooth}, and 
in minimax problems~\cite{wang2018acceleration}. Stochastic first-order oracles, modeling applications where only some estimate of the gradient is used, may also be viewed as inexact oracles whose accuracy is a random variable at each iteration. Thus, researchers have also investigated what one can say about algorithms that have access to inexact oracle responses (with possibly known guarantees on the inexactness). Early work on this topic appears in Shor~\cite{shor1985minimization} and Polyak~\cite{polyak1987introduction}, and more recent progress can be found in~\cite{Devolder_Nesterov2014_First_order_inexact_oracles,schmidt2011convergence,lan2009convex,hintermuller2001proximal,kiwiel2006proximal,d2008smooth} and references therein. To the best of our knowledge, all previous work on inexact first-order oracles has focused either on how specific algorithms like (accelerated) gradient methods perform with inexact (sub)gradients  with no essential change to the algorithm, or on how to adapt a particular class of algorithms to perform well with inexact information. 

In this paper, we provide a different approach to the problem of inexact information. We provide a way to take {\em any} first-order algorithm that solves~\eqref{eq:main-prob} with exact first-order information and, with absolutely no knowledge of the its inner workings, show how to make the same algorithm work with inexact oracle information. 
Thus, in contrast to earlier work, our result is not the analysis of specific algorithms under inexact information or the adaptation of specific algorithms to use inexact information.
It is in this sense that we believe our results to be {\em universal} because they apply to a much wider class of algorithms than previous work, including gradient descent, cutting plane methods, bundle methods, projection-free methods etc., and also to any first-order method that is invented in the future for optimization problems of the form~\eqref{eq:main-prob}. 



\section{Formal statement of results and discussion}\label{sec:results}

We begin with definitions of standard concepts that we need to state our results formally. We use $\|\cdot\|$ to denote the standard Euclidean norm and $B(c,r)$ to denote the Euclidean ball of radius $r$ centered at $c\in \R^d$. When the center is the origin, we denote the ball by $B(r)$. A function $h: \R^d \rightarrow \R$ is said to be $M$-Lipschitz if $|h(x) - h(y)| \le M \|x - y\|$ for all $x,y$.  Let $\F^0(M,R)$ denote the standard family of instances of the optimization problem \eqref{eq:main-prob} consisting of all $M$-Lipschitz (possibly non-differentiable) convex functions $f$ 
 such that the minimizer $x^\star \in X$ is contained in the ball $B(R)$.\footnote{This is a standard assumption in the analysis of optimization algorithm -- if no such bound is assumed, then it can be shown that no algorithm can report a good solution within a guaranteed number of steps for every instance~\cite{Nesterov-Book04}. Alternatively, one may give the convergence rates in terms of the distance of the initial iterate of the algorithm and the optimal solution (one can think of $R$ as an upper bound on this distance). Our results can also be formulated in this language with no conceptual or technical changes.}
%
%
We now formalize the inexact first-order oracles that we will work with.

\begin{definition}\label{def:approxFOO}
An \emph{$\eta$-approximate first-order oracle} for a convex function $f: \R^d \rightarrow \R$ takes as input a query point $\bar{x} \in \R^d$ and returns a first-order pair $(\tilde{f}, \tilde{g})$ satisfying $|\tilde{f} - f(\bar{x})|\leq \eta$ and $\norm{\tilde{g} - g} \leq \frac{\eta}{2R}$ for some subgradient $g \in \partial f(\bar{x})$. 
\end{definition}

We now state our main results. 
We remind the reader that in \eqref{eq:main-prob} the underlying set $X$ need not be $\R^d$ and may be nonconvex; below, when we talk about a first order algorithm for~\eqref{eq:main-prob} we mean an algorithm that can solve~\eqref{eq:main-prob} with access to first-order oracles for $f$. We use $\OPT(f)$ to denote the optimal value of the instance $f$.

\begin{theorem} \label{thm:main}
    Consider an algorithm for~\eqref{eq:main-prob} such that for any instance $f \in \F^0(M,R)$, with access to function values and subgradients of $f$, after $T$ iterations the algorithm reports a feasible solution $x \in X$ with error at most $\err(T, M, R)$, i.e., $f(x) \le \OPT(f) + \err(T, M, R)$.
    
    Then there is an algorithm that, with access to an $\eta$-approximate first-order oracle for $f$ for any $\eta \geq 0$, after $T$ iterations the algorithm returns a feasible solution $\bar{x} \in X$ with value 
    $$f(\bar{x}) \,\le\, \OPT(f) + err(T, M', R) + 4\eta T,$$ where $M' = M + \frac{\eta}{2R}$.
\end{theorem}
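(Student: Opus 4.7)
The plan is to run the given exact-oracle algorithm $\mathcal{A}$ as a black box, but to feed it oracle responses that are \emph{exact} first-order information for a carefully constructed convex surrogate $h$, rather than the raw approximate responses for $f$. Then $\mathcal{A}$'s assumed guarantee applies directly to $h$, and we translate the resulting bound back to $f$ by exploiting that $h$ is close to $f$ on the region of interest.

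Concretely, the wrapper algorithm simulates $\mathcal{A}$: at each iteration $t$, it intercepts $\mathcal{A}$'s query $x_t$, calls the $\eta$-approximate oracle for $f$ to obtain $(\tilde{f}_t,\tilde{g}_t)$, and hands back to $\mathcal{A}$ a possibly modified response $(\hat{f}_t,\hat{g}_t)$ chosen so that the cumulative data $\{(x_s,\hat{f}_s,\hat{g}_s)\}_{s\le t}$ is consistent with \emph{some} convex function. The standard convex-interpolation criterion says this holds iff $\hat{f}_r \ge \hat{f}_s + \langle \hat{g}_s,x_r-x_s\rangle$ for all pairs $(r,s)$. Keeping $\hat{g}_t := \tilde{g}_t$ and setting $\hat{f}_t := \max\bigl(\tilde{f}_t,\,\max_{s<t}(\hat{f}_s+\langle \hat{g}_s,x_t-x_s\rangle)\bigr)$ enforces this constraint online. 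Using convexity of $f$, the oracle bounds $|\tilde{f}_t-f(x_t)|\le\eta$ and $\|\tilde{g}_t-g_t\|\le\eta/(2R)$ for some $g_t\in\partial f(x_t)$, and the assumption that iterates stay in $B(R)$ so that $\|x_t-x_s\|\le 2R$, one shows the inflation $\hat{f}_t-\tilde{f}_t$ at each step is at most roughly $4\eta$, yielding $|\hat{f}_t-f(x_t)|\le 4\eta t \le 4\eta T$ throughout.

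I would then take $h(x) := \max_s\{\hat{f}_s+\langle \hat{g}_s,x-x_s\rangle\}$. Convex interpolability gives $h(x_s)=\hat{f}_s$ and $\hat{g}_s\in\partial h(x_s)$, so $\mathcal{A}$'s trajectory under an exact oracle for $h$ coincides with the wrapper's trajectory; $h$ is convex, $M'$-Lipschitz because $\|\hat{g}_s\|=\|\tilde{g}_s\|\le M+\eta/(2R)=M'$, and one must verify its $X$-minimizer lies in $B(R)$ so that $h\in\F^0(M',R)$. Applying $\mathcal{A}$'s hypothesis then gives $h(\bar{x})\le \OPT(h)+\err(T,M',R)$. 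Combining with the two-sided estimate $|h(x)-f(x)|\le 2\eta T$ on $X\cap B(R)$, obtained from $|\hat{f}_t-f(x_t)|\le O(\eta T)$ at query points and extended by Lipschitzness of $h$ and $f$, produces $f(\bar{x})\le \OPT(f)+\err(T,M',R)+4\eta T$.

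The main obstacle I anticipate is the subgradient-monotonicity condition implicit in convex interpolability: any convex $h$ must satisfy $\langle \hat{g}_r-\hat{g}_s,x_r-x_s\rangle\ge 0$, yet the raw approximate subgradients $\tilde{g}_r,\tilde{g}_s$ may violate this by up to $(\eta/R)\|x_r-x_s\|$. Preserving interpolability while keeping $\hat{g}_t=\tilde{g}_t$ may therefore demand a slight perturbation of the subgradients within the $\eta/(2R)$ slack (so that the $M'$-Lipschitz control survives), or a more refined choice of $\hat{f}_t$ that absorbs the monotonicity defect into the value inflation. Showing that this extra correction still fits within a total additive error of $4\eta T$, rather than blowing up multiplicatively in $T$, is the delicate bookkeeping step of the argument.
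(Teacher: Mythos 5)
Your high-level strategy matches the paper's: intercept $\cA$'s queries, synthesize first-order pairs $(\hat f_t,\hat g_t)$ that are exactly consistent with some nearby convex function, run $\cA$ on those, and transfer the guarantee back. But your specific online construction has a genuine gap, precisely at the point you flagged as worrisome, and your fix intuition points in the wrong direction.

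Convex interpolability requires \emph{both} $\hat f_t \ge \hat f_s + \ip{\hat g_s}{x_t - x_s}$ and $\hat f_s \ge \hat f_t + \ip{\hat g_t}{x_s - x_t}$ for all $s<t$. Your rule $\hat f_t := \max\bigl(\tilde f_t,\ \max_{s<t}(\hat f_s + \ip{\hat g_s}{x_t-x_s})\bigr)$ enforces only the first; lifting $\hat f_t$ upward while keeping $\hat g_t=\tilde g_t$ can make the second inequality \emph{worse}, because the new affine piece anchored at a higher $\hat f_t$ with a noisy slope $\tilde g_t$ may strictly exceed $\hat f_s$ at old points $x_s$. This is exactly the failure mode in the paper's Figure~1. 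The paper's construction resolves it by doing the opposite of what you propose: it first computes a downward shift $s^* := \min\{s\ge 0 : \tilde f_t - s + \ip{\tilde g_t}{x_\tau-x_t}\le F_{t-1}(x_\tau)\ \forall\tau<t\}$ so the new affine piece never rises above the old supporting hyperplanes, sets $F_t = \max\{F_{t-1},\ \tilde f_t + \ip{\tilde g_t}{\cdot-x_t} - s^*\}$, and then reads off $\hat f_t = F_t(x_t)$, $\hat g_t \in \partial F_t(x_t)$. Crucially $\hat g_t$ need \emph{not} equal $\tilde g_t$ (when the downshifted piece drops below $F_{t-1}$ at $x_t$, one reuses $F_{t-1}$'s subgradient there), so insisting on $\hat g_t=\tilde g_t$ is itself part of the obstruction.

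There is a second gap in the transfer step. You define $h$ as the pure piecewise-linear maximum of the affine pieces and claim $|h-f|\le 2\eta T$ on $X\cap B(R)$. But the lower bound $h(x)\ge f(x)-2\eta T$ is only controlled at the query points $x_1,\dots,x_T$; the returned point $\bar x$ need not be one of them, and $h(\bar x)$ can be far below $f(\bar x)$. The paper handles this with Lemma~\ref{lemma:finalFunc}: the actual extension is $F'(x) = \max\{F_T(x),\ f(x)-2\eta T\}$, which by construction satisfies the global lower bound and, because $F_T \le f + 2\eta T$ everywhere, still satisfies the global upper bound. Taking the max with a shifted copy of $f$ is essential for the final inequality $f(\bar x)\le F'(\bar x)+2\eta T$; a purely piecewise-linear $h$ cannot provide it.
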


    Although we state this theorem as an existence result, our proof is constructive and exactly formulates the desired algorithm via Procedures \ref{proc:optimization} and \ref{proc:lipExt}. Let us illustrate what this theorem says when applied to two classical algorithms for convex optimization (i.e., $X = \R^d$): subgradient methods  and cutting-plane methods~\cite{Nesterov-Book04}. When using \emph{exact} first-order information, the subgradient method produces after $T$ iterations a solution with error at most $O\big(\frac{MR}{\sqrt{T}}\big)$. Applying the procedures mentioned from Theorem~\ref{thm:main} to this algorithm, one obtains an algorithm that uses only $\eta$-approximate first-order information and after $T$ iterations produces a solution whose error is at most $O\Big(\frac{M R + \eta}{\sqrt{T}}\Big) + 2T\eta$. If one can choose the accuracy of the inexact oracle, setting $\eta = \frac{\eps^3}{M^2R^2}$ and $T = \lceil{\frac{M^2R^2}{\eps^2}}\rceil$ gives a solution with error at most $O(\eps)$. Note that this does not involve knowing anything about the original algorithm; it simply illustrates the tradeoff between the oracle accuracy and final solution accuracy.

Similarly, for classical cutting-plane methods (e.g., center-of-gravity, ellipsoid, Vaidya) the error after 
$T$ iterations is at most $O\left(MR\exp\left(\frac{-T}{\poly(d)}\right)\right)$. Thus, with access to 
$\eta$-approximate first-order oracles, we can use our result to produce a solution with error at most $O\left((MR + \frac{\eta}{2})\exp\left(\frac{-T}{\poly(d)}\right)\right) + 4\eta T$. With the desired accuracy of $\eta = O\left(\frac{\eps}{\poly(d)\log\left(\frac{MR}{\eps}\right)}\right)$, and $T = O\left(\poly(d)\log\big(\frac{MR}{\eps}\big)\right)$, it gives a solution with error at most $O(\eps)$.

We next consider the family of {\em $\alpha$-smooth functions}, i.e., the family $\F^1(M,\alpha,R)$ of $M$-Lipschitz convex functions that are differentiable with $\alpha$-Lipschitz gradient maps, whose minimizers are contained in $B(R)$. This is a classical family of objective functions in convex optimization that admits the celebrated accelerated method of~\citet{nesterovAcc} (see \cite{acceleration} for a survey). We give the following universal transfer theorem for algorithms for smooth objective functions. 

\begin{theorem} \label{thm:smooth-main}
    Consider an algorithm for~\eqref{eq:main-prob} such that for any instance in $f \in \F^1(M,\alpha, R)$, with access to function values and subgradients of $f$, after $T$ iterations the algorithm reports a feasible solution $x \in X$ with error at most $\err(T, M, \alpha, R)$, i.e., $f(x) \le \OPT(f) + \err(T, M, \alpha, R)$.
    
    Then for any $\eta \le \frac{\alpha R^2}{5T}$, there is an algorithm that, with access to an $\eta$-approximate first-order oracle for $f$, after $T$ iterations the algorithm returns a feasible solution $\bar{x} \in X$ with value 
    $$f(\bar{x}) \,\le\, \OPT(f) + err(T, M', \alpha', R) +  5 \eta \cdot (T+2),$$ where $M' = M + \frac{\eta}{2R}$, $\alpha' = \alpha \cdot \sqrt{d} \cdot \Big(4 \sqrt{5 (T+1)} + 3 \Big)$.
\end{theorem}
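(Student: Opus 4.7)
My plan is to follow the same template as Theorem~\ref{thm:main}: given the $\eta$-approximate first-order oracle for $f$, I construct an auxiliary ``surrogate'' function $\tilde f \in \F^1(M',\alpha',R)$ such that the answer of an \emph{exact} first-order oracle for $\tilde f$ at any queried point agrees with what the approximate oracle returned for $f$. I then simulate the given algorithm $\calA$ as if it were solving $\min_{x\in X}\tilde f(x)$ and finally convert $\calA$'s guarantee on $\tilde f$ into one on $f$ using that $\tilde f$ is uniformly close to $f$ on $B(R)$. Each query $\calA$ issues is handled by (a) calling the approximate oracle for $f$, and (b) interpreting the response as the exact response for $\tilde f$.

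The new ingredient over Theorem~\ref{thm:main} is to \emph{smooth} the piecewise-affine Lipschitz extension $\tilde f_L$ produced by Procedure~\ref{proc:lipExt}. I plan to use ball smoothing
\[
\tilde f(x) \;=\; \mathbb{E}_{u \sim U(B(0,\delta))}\bigl[\tilde f_L(x+u)\bigr],
\]
which preserves convexity and the $M'$-Lipschitz bound, introduces a uniform gap $0 \le \tilde f(x) - \tilde f_L(x) \le M'\delta$, and endows $\tilde f$ with a Lipschitz-continuous gradient of constant of order $\sqrt{d}\, M'/\delta$. Choosing $\delta$ so that this smoothness constant matches $\alpha' = \alpha\sqrt{d}\bigl(4\sqrt{5(T+1)}+3\bigr)$, i.e.\ $\delta$ of order $M'/\bigl(\alpha\sqrt{T+1}\bigr)$, places $\tilde f$ in $\F^1(M',\alpha',R)$. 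Because $\tilde f_L$ is piecewise affine, its ball-convolution $\tilde f$ and $\nabla \tilde f$ are computable at every query point from the oracle trace, as required for simulating $\calA$.

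The error accounting then parallels Theorem~\ref{thm:main}. On $B(R)$ the approximate first-order responses for $f$ agree with the exact first-order responses of $\tilde f$ up to a slack of $\eta + M'\delta$ in values and $\eta/(2R)$ in gradients; the additional $M'\delta$ over Theorem~\ref{thm:main} is the cost of the smoothing layer. Using $\eta \le \alpha R^2/(5T)$ together with the bound $M \le 2\alpha R$ (a consequence of $\alpha$-smoothness plus $x^\star \in B(R)$) one checks that $M'\delta \lesssim \eta$ with the above choice of $\delta$; substituting gives a per-iteration slack of $5\eta$, and summing over iterations (with the two extra $\eta$ offsets needed to produce lower/upper envelopes of $f$ by $\tilde f$) yields the $+5\eta(T+2)$ additive term, compared with $+4\eta T$ in Theorem~\ref{thm:main}. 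Applying $\calA$ to $\tilde f$ then gives $\tilde f(\bar x) \le \OPT(\tilde f) + \err(T,M',\alpha',R)$, and the closeness of $\tilde f$ and $f$ on $B(R)$ converts this into the stated bound on $f(\bar x)$.

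The main obstacle I anticipate is verifying that the smoothed surrogate $\tilde f$ simultaneously satisfies all three required properties with the advertised constants: (i) the smoothness $\alpha' = \alpha\sqrt d\bigl(4\sqrt{5(T+1)}+3\bigr)$ with its sharp $\sqrt d$ and $\sqrt T$ dependence, which requires the standard ball-smoothing estimate with a carefully tracked absolute constant; (ii) that $\tilde f$'s minimizer remains in $B(R)$ so that indeed $\tilde f \in \F^1(M',\alpha',R)$; and (iii) that the approximate oracle of $f$ can be treated as an exact oracle of $\tilde f$ up to a tolerance compatible with the $+5\eta(T+2)$ budget. All three are calibration arguments that hinge on the hypothesis $\eta \le \alpha R^2/(5T)$, which is precisely the slack needed to absorb the smoothing distortion $M'\delta$ into the same $O(\eta)$ per-iteration budget as in Theorem~\ref{thm:main}; once this calibration is settled, the rest of the proof is the same bookkeeping.
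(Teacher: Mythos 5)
Your high-level plan is correct and matches the paper's template: build a piecewise-affine Lipschitz extension as in Theorem~\ref{thm:main}, then ball-smooth it to land in $\F^1(M',\alpha',R)$, then simulate $\cA$ on the smoothed surrogate. However, there are two genuine gaps, and both are exactly the places where the paper introduces new machinery.

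First, the calibration in your third paragraph does not work with the standard smoothing estimate. If you bound the smoothness of the ball-convolution by the usual $\sqrt{d}\,M'/\delta$ and solve $\sqrt{d}\,M'/\delta = \alpha'$ for $\delta$, you get a smoothing radius $\delta$ that is \emph{independent of $\eta$}, namely $\delta \approx M'/\bigl(\alpha\sqrt{T}\bigr)$. The resulting distortion $M'\delta \approx M'^2/\bigl(\alpha\sqrt{T}\bigr)$ therefore does not go to zero as $\eta \to 0$, so it cannot be absorbed into the $5\eta(T+2)$ budget. Concretely, with $M'\approx 2\alpha R$ you get $M'\delta \approx 4\alpha R^2/\sqrt{T}$, while $\eta \le \alpha R^2/(5T)$ gives $M'\delta/\eta \gtrsim 20\sqrt{T}$, which is unbounded in $T$ and unbounded as $\eta$ decreases. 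This is why the paper proves Lemma~\ref{lemma:smoothing}: because the piecewise-affine extension is within $\e = O(\eta T)$ of the genuinely $\alpha$-smooth $f$, its ball-convolution at radius $r$ is $\bigl(\frac{4\sqrt{\alpha\e d}}{r}+3\alpha\sqrt{d}\bigr)$-smooth with distortion $\e + \frac{\alpha r^2}{2}$. Taking $r = \sqrt{\eta/\alpha}$ then gives a smoothness constant that is $\eta$-free (equal to $\alpha'$) \emph{and} a distortion that scales as $O(\eta T)$, which is exactly what the $\eta \le \alpha R^2/(5T)$ hypothesis is there to certify. Without this ``smoothing of almost-smooth functions'' estimate, the advertised $\alpha'$ and $5\eta(T+2)$ cannot coexist.

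Second, you assert that the ball-convolution of the piecewise-affine extension ``is computable at every query point from the oracle trace,'' but the extension $F_t$ produced by Procedure~\ref{proc:lipExt} is not stable over time: later iterations add affine pieces that can raise the function inside the ball $B(x_t,\delta)$, changing $(F_T)_\delta(x_t)$ and $\nabla(F_T)_\delta(x_t)$ away from what was reported at step $t$. The values and gradients fed to $\cA$ must be exact first-order data of a \emph{single} function, so this drift is fatal. The paper fixes this by replacing the minimal-shift rule of Procedure~\ref{proc:lipExt} with a deterministic, slightly larger shift in Procedure~\ref{proc:smoothExt}, and then proves (Lemma~\ref{lemma:smoothFt}, item~3) that $F_T$ and $F_t$ agree on a whole ball $B(x_t,\sqrt{2}r)$ --- the ``ball protection'' property --- which is what makes the smoothed values/gradients at $x_t$ final as soon as they are emitted. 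Your plan, which keeps Procedure~\ref{proc:lipExt} verbatim, does not have this property. Both of these gaps are the actual content of the smooth transfer theorem beyond Theorem~\ref{thm:main}, so the proposal as written does not close the argument.
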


    As an illustration, we apply this transfer theorem to the accelerated algorithm of \citet{nesterovAcc} for continuous optimization ($X = \R^d$): Under \emph{perfect} first-order information, it obtains error $O(\frac{\alpha R^2}{T^2})$ after $T$ iterations. 
    Using our transfer theorem as a wrapper gives an algorithm that, using only $\eta$-approximate first-order information, obtains error $O(\frac{\alpha R^2\sqrt{d}}{T^{1.5}} + \eta T)$; if the accuracy of the oracle is set to $\eta = O(\frac{\alpha R^2 \sqrt{d}}{T^{2.5}})$, this gives an algorithm with error $O(\frac{\alpha R^2\sqrt{d}}{T^{1.5}})$. 
    While this does not recover in full the acceleration of Nesterov's method, the key take away is that a significant amount of acceleration  (i.e., error rates better than those possible for non-smooth functions) can be preserved under inexact oracles in a \emph{universal} way, for \emph{any} accelerated algorithm requiring exact information. 

    \begin{remark} For the sake of exposition, we have assumed that the accuracy $\eta$ of the oracle is fixed and the additional error is $O(\eta T)$. However, one can allow different oracle accuracies $\eta_t$ at each query point $x_t$ and the additional error is $O(\sum_{t}\eta_t)$ (and the parameter $M' = M + \frac{1}{2R}\max_t{\eta_t}$).
    \end{remark}

\subsection{Allowing inexactness in the constraint set}

So far we have assumed that the algorithm has complete knowledge of the constraints $X$. Now, we extend our results to include algorithms that can work with larger classes of constraints that are not fully known up front. In other words, just like the algorithm needs to collect information about $f$, it also needs to collect information about $X$, via another oracle, to be able to solve the problem. To capture the most general algorithms of this type, we formalize this setting by assuming $X$ is of the form $C \cap Z$, where $C$ belongs to a class of closed, convex sets and $Z$ is possibly nonconvex but completely known (e.g., $Z = \Z^{d_1}\times \R^{d_2}$ with $d_1 + d_2 = d$).

\begin{equation}\label{eq:main-prob-flexible}
    \min\{f(x) : x \in C \cap Z\}.
\end{equation}

The algorithm then must collect information about $C$, for which we use the common model of allowing the algorithm access to a separation oracle. Upon receiving a query point $x$, a separation oracle either reports correctly that $x$ is inside $C$ or otherwise returns a separating hyperplane that separates $x$ from $C$. We note that a separation oracle for $C$ is in some sense comparable to a first-order oracle for a convex function $f$; since the pair $(f(x), \nabla f (x))$ can be viewed as providing a supporting hyperplane for the epigraph of $f$ at $x$, using an oracle that returns separating hyperplanes for $C$ provides a comparable way of collecting information about the constraints.  

Let us first precisely define the inexact version of a separation oracle.

\begin{definition} \label{def:approxSep}
  For a closed, convex set $C\subseteq B(R)$ and a query point $\bar{x}\in B(R)$, an \emph{$\eta$-approximate separation oracle} reports a separation response $({flag}, \tilde{g}) \in \{\feas, \infeas\}\times \R^d$ such that if $\bar{x}\in C$ then ${flag} = \feas$ (with no requirement on $\tilde{g}$), and otherwise ${flag} = \infeas$ and $\tilde{g}$ is a unit vector such that there exists some unit vector $g$ satisfying $\ip{g}{x} \leq \ip{g}{\bar{x}}$ for all $x\in C$ and $\norm{\tilde{g} - g }_2\leq \frac{\eta}{4R}$. Given such a $\tilde{g}$ (for $\bar{x} \notin C$), we call the hyperplane through $\bar{x}$ induced by this normal vector an \emph{$\eta$-approximate separating hyperplane} for $\bar{x}$. 
    \end{definition}

We now state our results for algorithms that work with separation oracles. Note that for this, instances of~\eqref{eq:main-prob} have to specify both $f$ and $C$, as opposed to just $f$, since only $Z$ is known but not $C$. We use $\I(M,R, \rho)$ to denote the set of all instances $(f, C)$ where $f:\R^d \to \R$ is an $M$-Lipschitz convex function and $C$ is a compact, convex set that contains a ball of radius $\rho$ and is contained in $B(R)$. We use $OPT(f,C)$ to denote the minimum value of~\eqref{eq:main-prob-flexible}. The ``strict feasibility" assumption of $C$ containing a $\rho$-ball is standard in convex optimization with constraints given via separation oracles. Otherwise, it can be shown that no algorithm will be able to find even an approximately feasible point in a finite number of steps~\cite{Nesterov-Book04}. The first result we state is for pure convex problems, i.e., $Z = \R^d$.

\begin{theorem} \label{thm:approxSep}
    Consider an algorithm for~\eqref{eq:main-prob-flexible} with $Z = \R^d$, such that for any instance in $(f,C) \in \cI(M,R,\rho)$, with access to function values and subgradients of $f$ and separating hyperplanes for $C$, after $T$ iterations the algorithm reports a feasible solution $x \in C$ with error at most $\err(T, M, R, \rho)$, i.e., $f(x) \le \OPT(f,C) + \err(T, M, R, \rho)$.
    
    Then there is an algorithm that, with access to an $\eta_f$-approximate first-order oracle for $f$ and an $\eta_C$-approximate separation oracle for $C$ for any $\eta_f \geq 0$ and $0 \leq \eta_C \le \rho$, after $T$ iterations the algorithm returns a feasible solution $\bar{x} \in C$ with value 
    $$f(\bar{x}) \,\le\, \OPT(f,C) + err(T, M', R, \rho') + 4\eta_f T +  \frac{2  \eta_C MR }{\rho},$$ where $M' = M + \frac{\eta_f}{2R}$ and $\rho' = \rho - \eta_C$.
\end{theorem}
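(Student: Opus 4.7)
The plan is to handle the two sources of inexactness separately and then compose: apply the Lipschitz-extension surrogate for $f$ from the proof of Theorem~\ref{thm:main}, and shrink the feasible set $C$ to a slightly smaller $C^-$ on which the inexact separation oracle for $C$ can simulate an \emph{exact} separation oracle for $C^-$. Running the given algorithm $\calA$ on $\min\{\bar f : x \in C^-\}$ with the simulated exact oracles then produces a point in $C^- \subseteq C$, from which one reads off the claimed bound.

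For the objective side, I would reuse the machinery behind Theorem~\ref{thm:main}: the $\eta_f$-approximate first-order oracle for $f$ is exact for an $M'$-Lipschitz surrogate $\bar f$ (with $M' = M + \eta_f/(2R)$ and $|f-\bar f|$ small pointwise), which contributes the $4\eta_f T$ error term and the $M \mapsto M'$ substitution exactly as before, independently of the constraints.

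For the constraint side, let $c$ be the center of a strict-feasibility ball $B(c,\rho) \subseteq C$ and set $C^- := c + \lambda(C-c)$ with $\lambda := 1 - \eta_C/\rho$. Then $C^- \subseteq C$, $C^-$ contains $B(c,\rho-\eta_C)$ so its strict-feasibility parameter is $\rho' = \rho - \eta_C$, and for any $x^\star \in C$ the scaled point $c + \lambda(x^\star - c) \in C^-$ is within $\tfrac{2R\eta_C}{\rho}$ of $x^\star$, so $M$-Lipschitzness gives $\OPT(f,C^-) \le \OPT(f,C) + \tfrac{2MR\eta_C}{\rho}$, accounting for the $\tfrac{2\eta_C MR}{\rho}$ term in the statement. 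To simulate an exact separation oracle for $C^-$ on a query $\bar x$, I would inflate to $\bar x' := c + \lambda^{-1}(\bar x - c)$ and call the inexact $C$-oracle on $\bar x'$. The feasibility flag is correct because $\bar x \in C^- \iff \bar x' \in C$, and when $\bar x' \notin C$ the returned approximate unit normal $\tilde g$ should (after unwinding the scaling) serve as a separating hyperplane through $\bar x$ for $C^-$.

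The main obstacle is verifying that the $\eta_C/(4R)$-error of $\tilde g$ relative to the true normal $g$ is fully absorbed by the $\lambda$-shrinking, so that $\tilde g$ is a \emph{bona fide} separator for $C^-$ at $\bar x$, not merely an approximate one. Unwinding the scaling, for $z \in C^-$ the true separator satisfies $\langle g, z\rangle \le \langle g,\bar x\rangle$, and the residual $\langle \tilde g - g, z - \bar x\rangle$ is at most $\tfrac{\eta_C}{4R} \cdot 2R = \tfrac{\eta_C}{2}$; this naive slack must be killed either by the negative margin that the scaling $\lambda < 1$ introduces between $\langle g, z\rangle$ and $\langle g,\bar x\rangle$, or by taking a tiny additional Minkowski shrink of $C^-$ of order $\eta_C$ (which does not change $\rho'$ up to constants). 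Once the simulated oracle is certified exact, running $\calA$ on $\min\{\bar f : x \in C^-\}$ yields $\bar x \in C^- \subseteq C$ with $\bar f(\bar x) \le \OPT(\bar f, C^-) + \err(T,M',R,\rho')$, and translating back through $\bar f \leftrightarrow f$ and $C^- \leftrightarrow C$ gives the stated bound.
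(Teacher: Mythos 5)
Your route is genuinely different from the paper's. The paper corrects the approximate separation responses \emph{online}: Procedure~\ref{proc:convExt} maintains a set $K_t$ as an intersection of halfspaces and, whenever a new approximate separating hyperplane would cut off a point previously declared \feas, rotates that hyperplane as little as possible to restore consistency, which yields a convex extension $K$ with $C_{-\eta_C}\subseteq K\subseteq C$. You instead propose a global radial shrink $C^- = c + \lambda(C-c)$ about a deep center $c$ and simulate an exact oracle for $C^-$ by inflating queries before calling the inexact oracle for $C$.

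The decisive gap is that your construction needs the center $c$ of an inscribed $\rho$-ball, and the algorithm does not have it. The class $\cI(M,R,\rho)$ only guarantees the \emph{existence} of such a ball; the algorithm's only handle on $C$ is the $\eta_C$-approximate separation oracle. The inflation $\bar x\mapsto c+\lambda^{-1}(\bar x-c)$ cannot be computed without $c$, and locating a sufficiently deep interior point from separation responses alone would cost an extra budget of oracle calls (typically $\mathrm{poly}(d)\log(R/\rho)$) that the theorem's ``after $T$ iterations'' accounting does not grant. The paper's procedure is deliberately center-free for exactly this reason. Even setting $c$ aside, the step you flag but do not close is a real problem: the ``negative margin from $\lambda<1$'' need not exist, since a valid separation response at $\bar x'$ can have zero gap to $C$ (separating hyperplanes need not be strictly separating), so after scaling back the gap to $C^-$ at $\bar x$ is also zero. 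The residual slack $\|\tilde g - g\|\cdot\|z-\bar x\|\le \eta_C/2$ then forces a further Minkowski shrink of $C^-$, which changes $\rho'$ and the $\OPT$ bookkeeping by additional $\Theta(\eta_C)$ terms not present in the claimed bound. The paper avoids this entirely because its responses are \emph{exact} for $K$ by construction, and the only thing to verify is that the small rotations (bounded using the $\eta_C/(4R)$ guarantee and previously-feasible points lying in $C$) keep $K\supseteq C_{-\eta_C}$, which is Lemma~\ref{lemma:KConvExt}.
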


We can handle more general, nonconvex $Z$ with separation oracles under a slightly stronger ``strict feasibility" assumption on $C$: let $\cI^\star(M,R,\rho)$ denote the subclass of instances from $\cI(M,R,\rho)$ where the minimizer $x^\star$ of~\eqref{eq:main-prob-flexible} is $\rho$-deep inside $C$, i.e., $B(x^\star, \rho) \subseteq C$.

\begin{theorem} \label{thm:approxSep-general}
    Consider an algorithm for~\eqref{eq:main-prob-flexible}, such that for any instance in $(f,C) \in \cI^\star(M,R,\rho)$, with access to function values and subgradients of $f$ and separating hyperplanes for $C$, after $T$ iterations the algorithm reports a feasible solution $x \in C\cap Z$ with error at most $\err(T, M, R, \rho)$, i.e., $f(x) \le \OPT(f,C) + \err(T, M, R, \rho)$.
    
    Then there is an algorithm that, with access to an $\eta_f$-approximate first-order oracle for $f$ and an $\eta_C$-approximate separation oracle for $C$ for any $\eta_f \geq 0$ and $0 \leq \eta_C \le \rho$, after $T$ iterations the algorithm returns a feasible solution $\bar{x} \in C\cap Z$ with value 
    $$f(\bar{x}) \,\le\, \OPT(f,C) + err(T, M', R, \rho') + 4\eta_f T,$$ where $M' = M + \frac{\eta_f}{2R}$ and $\rho' = \rho - \eta_C$.
\end{theorem}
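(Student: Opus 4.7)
The plan is to combine the Lipschitz-extension wrapper of Theorem~\ref{thm:main} for the inexact first-order oracle with a ``set-shrinking'' argument for the inexact separation oracle; the extra hypothesis $B(x^\star,\rho)\subseteq C$ built into $\cI^\star$ is exactly what kills the $2\eta_C M R / \rho$ term that appeared in Theorem~\ref{thm:approxSep}.

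Concretely, I would first introduce the inner parallel body $C' := \{y\in\R^d : B(y,\eta_C)\subseteq C\}$, which is a closed convex subset of $C$ containing a ball of radius $\rho' = \rho - \eta_C$. Since $B(x^\star,\rho)\subseteq C$ and $\eta_C\le \rho$, the minimizer $x^\star$ already lies in $C'$, so $\OPT(f,C'\cap Z) = \OPT(f,C\cap Z)$; this is the one and only place where the strengthened feasibility assumption of $\cI^\star$ is used. I would then build a wrapper around $\mathcal{A}$ that, on each query, invokes Procedure~\ref{proc:lipExt} for the first-order oracle (contributing the $4\eta_f T$ error and the Lipschitz-constant increase $M\mapsto M' = M + \eta_f/(2R)$ exactly as in Theorem~\ref{thm:main}) and forwards the response of the $\eta_C$-approximate separation oracle for $C$ directly to $\mathcal{A}$.

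The key calculation is to check that, when the separation oracle returns $(\infeas,\tilde g)$ at a query $\bar x\notin C$, the vector $\tilde g$ is an exact (in fact strict) separator of $\bar x$ from $C'$. Using $\|g-\tilde g\|\le \eta_C/(4R)$, $\|x\|,\|\bar x\|\le R$, and the exact separation property $\langle g,x\rangle \le \langle g,\bar x\rangle$ for all $x\in C$, a two-line computation gives $\langle \tilde g,x\rangle\le \langle \tilde g,\bar x\rangle + \eta_C/2$ for all $x\in C$; substituting $x = y + \eta_C\tilde g$ for any $y\in C'$ (which lies in $C$ since $\|\tilde g\| = 1$) yields $\langle \tilde g,y\rangle \le \langle \tilde g,\bar x\rangle - \eta_C/2$ for every $y\in C'$. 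In particular every half-space declared by the wrapper strictly contains $C'$ and thus contains $x^\star$.

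It then remains to argue that the transcript produced by the wrapper is indistinguishable from an exact-oracle transcript for $\mathcal{A}$ on some instance $(\tilde f,\hat C)\in\cI^\star(M',R,\rho')$ with $C'\subseteq \hat C\subseteq C$, namely $\hat C$ equal to the intersection of $C$ with all the cuts declared so far along the run; $\mathcal{A}$'s exact-oracle guarantee then gives a feasible $\bar x \in \hat C\cap Z \subseteq C\cap Z$ with $\tilde f(\bar x)\le \OPT(\tilde f,\hat C\cap Z) + \err(T,M',R,\rho')$, and translating $\tilde f$ back to $f$ via the properties of Procedure~\ref{proc:lipExt} together with $\OPT(\tilde f,\hat C\cap Z)\le \tilde f(x^\star)\le f(x^\star) + 2\eta_f$ produces the bound in the statement. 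The main obstacle is precisely this consistency argument: one must verify that the simulated $\feas$ responses (which are directly correct for $C$, not for $\hat C$) and the declared strict cuts together constitute a bona fide exact-oracle transcript for $\hat C$, so that $\mathcal{A}$'s analysis applies verbatim at parameters $(M',R,\rho')$. The nonconvex set $Z$ plays no role in this argument since $Z$ is known to $\mathcal{A}$ up front and is not accessed through any oracle.
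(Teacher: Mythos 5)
Your proposal correctly identifies the role of the $\rho$-deep hypothesis (it puts $x^\star$ inside the inner parallel body $C_{-\eta_C}$, eliminating the $2\eta_C MR/\rho$ loss), correctly handles the objective via Procedure~\ref{proc:lipExt}, and your per-hyperplane margin computation ($\langle \tilde g, y\rangle \le \langle \tilde g, \bar x\rangle - \eta_C/2$ for $y\in C'$) is right. But the step you flag as ``the main obstacle'' is not a loose end you can leave unresolved --- it is exactly where the direct-forwarding approach breaks. If you pass the oracle's $\feas/\infeas$ responses through unmodified, the resulting transcript need not be consistent with \emph{any} convex set, for two reasons: (i) a later approximate cut $H(\tilde g_t, x_t)$ can cut off an earlier query $x_\tau$ that was declared $\feas$ --- your margin argument only shows the cut contains $C'$, but $x_\tau$ may lie in $C\setminus C'$ and still be excluded; and (ii) the oracle may return $\feas$ at a point $x_t$ that already violates a previously declared cut, again producing an inconsistent transcript. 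In both cases no set $\hat C$ of the form $C\cap(\text{cuts})$ is compatible with the full response sequence, so the black-box guarantee of $\mathcal A$ cannot be invoked.

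The paper closes this gap with Theorem~\ref{thm:cvxExt} and Procedure~\ref{proc:convExt}: it maintains the set of previously-declared-feasible points $\feasbar_{t-1}$, and upon an $\infeas$ response it \emph{rotates} $\tilde g_t$ to the nearest unit vector $\hat g_t$ whose halfspace $H(\hat g_t, x_t)$ still contains $\feasbar_{t-1}$; it also demotes to $\infeas$ any $\feas$ response at a point already excluded by $K_{t-1}$. Lemma~\ref{lemma:KConvExt} then shows the rotation is at most $\eta_C/(4R)$ (since the true exact separator already contains $\feasbar_{t-1}\subseteq C$), so the rotated cuts still contain $C_{-\eta_C}$, and the resulting responses are consistent by construction with $K_T\cap C$, which is sandwiched between $C_{-\eta_C}$ and $C$. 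That online rewriting of the separation responses --- not mere forwarding plus a margin observation --- is the missing ingredient. With that in place, your bookkeeping for the objective and the use of the $\rho$-deep assumption go through essentially as the paper does.
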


\begin{remark}
The objective functions in the above results were allowed to be any $M$-Lipschitz, possibly nondifferentiable, convex function. One can state versions of these results for algorithms that work for the smaller class of $\alpha$-smooth functions (e.g., accelerated projected gradient methods), just as Theorem~\ref{thm:smooth-main} is a version of Theorem~\ref{thm:main} for $\alpha$-smooth objectives. The reason is that the analysis for handling constraints is independent of the arguments needed to handle the objective using inexact oracles; however, for space constraints, we leave the details out of this manuscript. Additionally, one can prove versions of all our theorems for strongly convex objective functions, but we leave these out of the manuscript as well to convey the main message of the paper more crisply.
\end{remark}

\subsection{Relation to existing work} Previous work on inexact first-order information focused on how certain known algorithms perform or can be made to perform under inexact information, most recently on (accelerated) proximal-gradient methods. For instance,~\cite{Devolder_Nesterov2014_First_order_inexact_oracles} analyze the performance of (accelerated) gradient descent in the presence of inexact oracles, with no change to algorithm. They show that simple gradient descent (for unconstrained problems) will return a solution with additional error $O(\eta)$ and accelerated gradient descent incurs an additional error of $O(\eta T)$ (similar to our guarantees). We provide a more thorough comparison of our setting and results with those of \cite{Devolder_Nesterov2014_First_order_inexact_oracles} in Appendix~\ref{sec: comparison with devolder et al}.
    
Similarly,~\cite{schmidt2011convergence} does an analysis for (accelerated) proximal gradient methods, with more complicated forms of the additional error, depending on how well the proximal problems are solved. \cite{Gasnikov2019FastGD} and \cite{cohen2018acceleration} also study gradient methods in inexact settings, with their analyses being specific to particular algorithms. 
     
    In contrast, our result does not assume any knowledge of the internal logic of the algorithm. We must, therefore, use the algorithm in a ``black-box'' manner. We are able to do this by using the inexact oracles to construct a modified instance whose optimal solution is similar in quality to that of the true instance, and where this inexact information from the true instance can be interpreted as {\em exact} information for the modified instance. Thus, we can effectively run the algorithm as a black-box on this modified instance and leverage its error guarantee. Constructing this modified instance in an online fashion requires technical ideas that are new, to the best of our knowledge, in this literature. For instance, it is not even true that given approximate function values and subgradients of a convex function, we can find another convex function that has these as exact function values and subgradients; see Figure \ref{fig:approx-info-nonconvex}. Thus, one cannot directly use the inexact information as is (contrary to what is done in many of the papers dealing with inexact information for {\em specific} algorithms), in the general case we consider. The key is to modify the inexact information so that the information the algorithm receives admits an \emph{extension into a convex function/set} that is still close to the original instance. When dealing with $\alpha$-smooth objectives, the arguments are especially technically challenging since we have to report approximate function and gradient values that allow for a {\em smooth} extension that also approximates the unknown objective well. This involves careful use of new, localized smoothing techniques and maximal couplings of probability distributions. Such smoothing guarantees based on the proximity to the class of smooth functions may be of independent interest (see Theorem~\ref{thm:smoothExt}).

\par{\em New applications:} Since our results apply to algorithms for any ground set $X$, we are able to handle {\em mixed-integer convex optimization}, i.e., $X = C \cap(\Z^{d_1} \times \R^{d_2})$, with inexact oracles. Recently, there have been several applications of such optimization problems in machine learning and statistics~\cite{bertsimas2016best,mazumder2017discrete,bandi2019learning,dedieu2021learning,dey2022using,hazimeh2022sparse,hazimeh2023grouped}. General algorithms for mixed-integer convex optimization, as well as specialized ones designed for specific applications in the above papers, all involve a sophisticated combination of techniques like branch-and-bound, cutting planes and other heuristics. To the best of our knowledge, the performance of these algorithms has never been analyzed under the presence of inexact oracles which can cause issues for all of these components of the algorithm. Our results apply immediately to all these algorithms, precisely because the internal workings of the algorithm are abstracted away in our analysis. This yields the first ever versions of these methods that can work with inexact oracles. Moreover, $X$ can be used to model other types of structured nonconvexities (e.g., complementarity constraints~\cite{cottle2009linear}) and our results show how to adapt algorithms in those settings to work with inexact oracles. Note that this holds for the cases where the convex set $C$ is explicitly known a priori (Theorems~\ref{thm:main} and~\ref{thm:smooth-main}), or must be accessed via separation oracles (Theorems~\ref{thm:approxSep} and~\ref{thm:approxSep-general}).

The remainder of this paper is dedicated to the proof sketches of Theorems~\ref{thm:main} and~\ref{thm:smooth-main}. The missing details and proofs of Theorems~\ref{thm:approxSep} and~\ref{thm:approxSep-general} can be found in the Appendix.

\section{Universal transfer for Lipschitz functions}
\label{sec:optimization}



    In this section we prove our transfer result stated in Theorem~\ref{thm:main}. The proof relies on the following key concept: Given a set of points $x_1,\ldots,x_T \in B(R)$ (e.g., queries made by an optimization algorithm), we say that the sequence of first-order pairs\footnote{We use \emph{first-order pair} as just a more ``visual'' name for a pair in $\R \times \R^d$.} $(f_1,g_1),\ldots,(f_T,g_T) \in \R \times \R^d$ has an \emph{$M$-Lipschitz convex extension}, or simply {\em $M$-extension}, if there is a function $F$ that is convex, $M$-Lipschitz, and such that $f_t = F(x_t)$ and $g_t \in \partial F(x_t)$ for all $t$, i.e., the first-order information of $F$ at the queried points is \textbf{exactly} $\{(f_t,g_t)\}_t$. 
	
	As mentioned in the introduction, the main idea is to feed to the convex optimization algorithm $\cA$ a sequence of pairs $(f_t,g_t)$'s that have an $M$-Lipschitz extension $F$ that is close to the original function $f$. Since the information is consistent with what the algorithm expects when interacting exactly with the function $F$, it will approximately optimize the latter which will then give an approximately optimal solution to the neighboring function $f$. 
	
	Unfortunately, it is easy to see approximate first-order information from $f$ for the queried points $x_t$'s does not necessarily have a Lipschitz convex extension (see Figure \ref{fig:approx-info-nonconvex}). Thus, the main subroutine of our algorithm \emph{Approximate-to-Exact} given below is that given an approximate first-order oracle for $f$, it constructs first-order pairs $(\hat{f}_t, \hat{g}_t)$'s in an online fashion (i.e. $(\hat{f}_t, \hat{g}_t)$ only depends on $x_1,\ldots,x_t$) with the desired extension properties. For a function $g$, let $\|g\|_{\infty} := \sup_x |g(x)|$ denote its sup-norm.

\begin{figure}[htbp]
\begin{center}\includegraphics[width=0.45\textwidth]{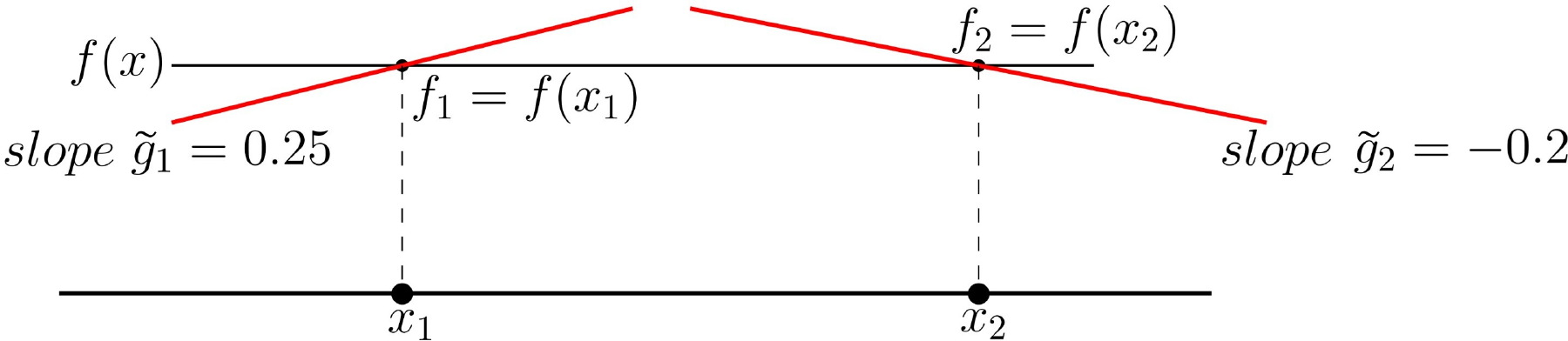}\end{center}
\caption{An example where two approximate function values and subgradients do not have a convex extension. The true function $f$ is constant. The function values are reported with no error. The reported slopes are shown in red. However, these slopes decrease going from $x_1$ to $x_2$ thus eliminating the possibility of any convex function having these values and slopes at $x_1$ and $x_2$.}\label{fig:approx-info-nonconvex}
\end{figure}
 
	\begin{theorem}[Online first-order Lipschitz-extensibility] \label{thm:LipExt}
		Consider an  $M$-Lipschitz convex function $f : B(R) \rightarrow \R$, and a sequence of points $x_1,\ldots,x_T \in B(R)$. There is an online procedure that, given $\eta$-approximate first-order oracle access to $f$, produces first-order pairs $(\hat{f}_1,\hat{g}_1),\ldots,(\hat{f}_T,\hat{g}_T)$ that have a $(M+\frac{\eta}{2 R})$-extension $F : B(R) \rightarrow \R$ satisfying $\|F - f\|_{\infty} \le 2\eta T$. (Moreover, the procedure only probes the approximate oracle at the given points $x_1,\ldots,x_T$.)
	\end{theorem}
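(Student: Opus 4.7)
My plan is to define the reported pairs $(\hat{f}_t, \hat{g}_t)$ online so that they are \emph{exactly} the first-order data at $x_t$ of some convex, $(M + \eta/(2R))$-Lipschitz function $F$ that is uniformly close to $f$. The natural candidate is the piecewise-affine lower envelope
\begin{equation*}
F(x) \;:=\; \max_{t \in \{1,\ldots,T\}} \bigl[\hat{f}_t + \langle \hat{g}_t,\, x - x_t\rangle\bigr],
\end{equation*}
which is automatically convex; setting $\hat{g}_t := \tilde{g}_t$ (or a small perturbation thereof) gives $\|\hat{g}_t\| \le \|g_t\| + \eta/(2R) \le M + \eta/(2R)$, yielding the target Lipschitz constant.

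With the slopes fixed, the design problem reduces to choosing the scalars $\hat{f}_t$ online so that $F(x_t) = \hat{f}_t$ and $\hat{g}_t \in \partial F(x_t)$ hold for every $t$. Both conditions are equivalent to the pairwise subgradient inequalities $\hat{f}_s + \langle \hat{g}_s, x_t - x_s\rangle \le \hat{f}_t$ for \emph{all} pairs $(s,t)$. I would enforce them by a max-type recursion of the form $\hat{f}_t := \max\{\tilde{f}_t + c\eta,\ \max_{s<t}[\hat{f}_s + \langle \hat{g}_s, x_t - x_s\rangle]\}$ for a small constant $c$: the anchor $\tilde{f}_t + c\eta$ keeps $\hat{f}_t$ close to $f(x_t)$, while the max over past tangent evaluations enforces the \emph{forward} inequalities ($s<t$).

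The main obstacle is the \emph{backward} inequalities $\hat{f}_t + \langle \hat{g}_t, x_s - x_t\rangle \le \hat{f}_s$ for $s<t$, which must be inherited at time $t$ from values $\hat{f}_s$ already committed in earlier steps. Because approximate subgradients can violate exact monotonicity --- $\langle \tilde{g}_t - \tilde{g}_s, x_t - x_s\rangle$ may be as negative as $-\eta\|x_t-x_s\|/R$ --- the construction needs an $O(\eta)$ slack per step (absorbed by the cumulative $O(\eta t)$ gap in the $\hat{f}_s$'s) and possibly a small online correction of $\hat{g}_t$ away from $\tilde{g}_t$ in directions determined by past data, carefully calibrated so that the final $\hat{g}_t$'s still respect the Lipschitz bound. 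The key ingredient is the exact convexity inequality $f(x_t) + \langle g_t, x_s - x_t\rangle \le f(x_s)$ for the \emph{true} (unknown) subgradient $g_t \in \partial f(x_t)$, combined with the oracle guarantees $|\tilde{f}_t - f(x_t)| \le \eta$ and $\|\tilde{g}_t - g_t\| \le \eta/(2R)$.

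Given the construction, the sup-norm bound would follow by induction: one shows $f(x_t) - O(\eta) \le \hat{f}_t \le f(x_t) + 2\eta t$ at each queried point, the lower bound directly from the anchor and the upper bound propagating $2\eta$ of error per step through the max by combining oracle accuracy with convexity of $f$. To extend to arbitrary $x \in B(R)$, each affine piece satisfies
\begin{equation*}
\hat{f}_t + \langle \hat{g}_t, x - x_t\rangle \;\le\; f(x_t) + 2\eta t + \langle g_t, x - x_t\rangle + \eta \;\le\; f(x) + 2\eta T + \eta
\end{equation*}
using $\|\tilde{g}_t - g_t\| \le \eta/(2R)$ together with $\|x-x_t\| \le 2R$ and the exact subgradient inequality for $f$. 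Taking the max over $t$ gives $F \le f + O(\eta T)$ uniformly; a matching lower bound follows from the anchor. This yields $\|F - f\|_\infty \le 2\eta T$, with the $\Theta(\eta T)$ cumulative error being the intrinsic cost of an online, algorithm-agnostic reduction from inexact to exact first-order information.
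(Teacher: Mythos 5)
Your candidate $F$ is a piecewise-affine lower envelope of (approximate) tangent lines, and you show $F(x) \le f(x) + O(\eta T)$ for all $x$. But the matching bound $F(x) \ge f(x) - O(\eta T)$ is only established at the queried points $x_t$, not uniformly on $B(R)$: tangent-line maxima can dip arbitrarily far below $f$ at points far from the queries, so $\|F - f\|_\infty$ as defined is not bounded. The paper's $F_T$ has exactly this shortfall, which is why it does not stop there --- Lemma~\ref{lemma:finalFunc} replaces $F_T$ by $\max\{F_T,\, f - \delta\}$ (with $\delta = 2\eta T$), which preserves the extension property at the $x_t$'s (because $F_T(x_t) \ge f(x_t) - \delta$ makes the $F_T$ piece active there) while forcing a uniform lower bound on $F$. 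This patch is not cosmetic; without it the sup-norm claim is false.

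Second, your mechanism for the backward inequalities is not a construction. You acknowledge that $\langle \tilde{g}_t - \tilde{g}_s, x_t - x_s\rangle$ may be negative and that some "$O(\eta)$ slack" plus "a small online correction of $\hat{g}_t$" is needed, but you never specify how. Worse, your recursion $\hat{f}_t := \max\{\tilde{f}_t + c\eta,\ \max_{s<t}[\hat{f}_s + \langle \hat{g}_s, x_t - x_s\rangle]\}$ shifts $\hat{f}_t$ \emph{upward}, which tightens the backward constraints $\hat{f}_t + \langle\hat{g}_t, x_s - x_t\rangle \le \hat{f}_s$ rather than relaxing them. The paper moves in the opposite direction: it computes the minimal downward shift $s^* \ge 0$ so that the new affine piece $\tilde{f}_t - s^* + \langle\tilde{g}_t, \cdot - x_t\rangle$ does not exceed $F_{t-1}$ at any previous $x_\tau$, then sets $F_t = \max\{F_{t-1},\ \text{shifted piece}\}$ and reads off $(\hat{f}_t, \hat{g}_t)$ from $F_t$ at $x_t$. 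In particular, when the downshifted piece falls below $F_{t-1}(x_t)$, the reported $\hat{g}_t$ is a subgradient of $F_{t-1}$ at $x_t$, \emph{not} $\tilde{g}_t$ --- a case your proposal (which hard-codes $\hat{g}_t = \tilde{g}_t$) does not handle. Both of these are substantive gaps, not polish.
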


    With this at hand, given any first-order algorithm $\cA$ we can run it using only approximate first-order information in the following natural way:
    
    \vspace{4pt}
    \begin{mdframed}
    \begin{proc} \textbf{Approximate-to-Exact}($\cA$, $T$) \label{proc:optimization}

    For each timestep $t = 1,\ldots, T$:

    \vspace{-6pt}
    \begin{enumerate}[leftmargin=18pt]
        \item Receive query point $x_t \in B(R)$ from $\cA$. 
        
        \item \vspace{-6pt} Send point $x_t$ to the $\eta$-approximate oracle for $f$ and receive the information $(\tilde{f}_t, \tilde{g}_t)$.
        
        \item \vspace{-6pt} Use the online procedure from Theorem \ref{thm:LipExt} to construct the first-order pair $(\hat{f}_t, \hat{g}_t)$.
        
        \item \vspace{-6pt} Send $(\hat{f}_t, \hat{g}_t)$ to the algorithm $\cA$.
    \end{enumerate}
    
    \vspace{-6pt} Return the point in $X$ returned by $\cA$.
\end{proc}
\end{mdframed}

 

    \begin{proof}[Proof of Theorem \ref{thm:main}]  Consider a first-order algorithm $\cA$ that, for any $M$-Lipschitz convex function
    , after $T$ iterations returns a point $\bar{x} \in X$ such that $f(\bar{x}) \le \OPT(f) + err(T,M,R)$, 
    We show that running Procedure \ref{proc:optimization} with $\cA$ as input, which only uses an $\eta$-approximate oracle for $f$, returns a point $\bar{x} \in X$ such that $f(\bar{x}) \le \OPT(f) + err(T, M', R) + 4\eta T$ with $M' = M + \frac{\eta}{2R}$.
    
    To see this, let $F$ be an $M'$-extension for the first-order pairs $(\hat{f}_t,\hat{g}_t)$ sent to the algorithm $\cA$ in Procedure \ref{proc:optimization} with  $\|F - f\|_{\infty} \le 2\eta T$, guaranteed 
  by Theorem \ref{thm:LipExt}. This means that the execution of the first-order algorithm $\cA$ during our procedure is exactly the same as executing $\cA$ directly on the convex function $F$. Thus, by the error guarantee of $\cA$, the point $\bar{x} \in X$ returned by $\cA$ after $T$ iterations (which is the same point returned by our procedure) is almost optimal for $F$, i.e., $F(\bar{x}) \le \OPT(F) + \err(T, M', R)$. Since $F$ and $f$ are pointwise within $\pm 2 \eta T$ of each other, the value of the solution $\bar{x}$ with respect to the original function $f$ satisfies
    \begin{align*}
  	f(\bar{x}) \le F(\bar{x}) + 2\eta T &\le \OPT(F) + err(T, M', R) + 2\eta T \\
   &\le \OPT(f) + err(T, M', R) + 4 \eta T,
  \end{align*}
  which proves the desired result. 
\end{proof}
	

	\subsection{Computing Lipschitz-extensible first-order pairs} \label{sec:lipExt}

	In this section we describe the procedure that constructs the first-order pairs with a Lipschitz convex extension $F$ that satisfies $\|F - f\|_{\infty} \le 2\eta T$, proving Theorem \ref{thm:LipExt}. Before getting into the heart of the matter, we show that the latter property can be significantly weakened: instead of requiring both $f(x) \ge F(x) - 2\eta T$ and $f(x) \le F(x) + 2\eta T$ for all $x \in B(R)$, we can relax the latter to only hold for the queried points $x_1,\ldots,x_T$. 
	
    \begin{lemma} \label{lemma:finalFunc}
     	Consider a sequence of points $x_1,\ldots,x_T \in B(R)$, and a sequence of first-order pairs $(\hat{f}_1,\hat{g}_1),\ldots,(\hat{f}_T,\hat{g}_T)$. Consider $\delta > 0$ and $M' \ge M$, and suppose that there is an $M'$-extension $F$ of these first-order pairs that satisfies:
			\begin{align}
				&\underbrace{f(x) \ge F(x) - \delta}_{\textrm{approx. under-approximation}}, ~~~~~~~\forall x \in B(R) \label{eq:underApprox}\\
				&\underbrace{f(x_t) \le F(x_t) + \delta}_{\textrm{approx. queried values}}, ~~~~~\forall t \in  \{1,\ldots,T\}.   \label{eq:queriedApprox}
			\end{align}
	Then the first-order pairs have an $M'$-extension $F'$ such that $\|F' - f\|_{\infty} \le \delta$. In particular, setting $$F'(x) = \max\{F(x), f(x) - \delta\}$$ provides such an extension. 
   \end{lemma}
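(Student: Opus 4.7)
The plan is simply to verify that the explicit candidate $F'(x) := \max\{F(x),\, f(x)-\delta\}$ satisfies all three requirements: (i) it is convex and $M'$-Lipschitz, (ii) it is still an extension of the pairs $\{(\hat{f}_t,\hat{g}_t)\}_t$, and (iii) it lies uniformly within $\delta$ of $f$. Each verification is short; the only subtle point is preserving the subgradient information at the queried points.

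For the structural part, I would note that $F'$ is a pointwise maximum of two convex functions, namely $F$ (convex by hypothesis) and $f-\delta$ (convex since $f$ is), so $F'$ is convex. Since $F$ is $M'$-Lipschitz by hypothesis and $f$ is $M$-Lipschitz with $M \le M'$, the shift $f-\delta$ is also $M'$-Lipschitz, and pointwise maxima of $M'$-Lipschitz functions remain $M'$-Lipschitz. For the extension property at a queried point $x_t$, hypothesis \eqref{eq:queriedApprox} gives $f(x_t)-\delta \le F(x_t) = \hat{f}_t$, so the maximum defining $F'(x_t)$ is attained by $F(x_t)$ and equals $\hat{f}_t$. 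For the subgradient, from $\hat{g}_t \in \partial F(x_t)$ together with $F \le F'$ pointwise,
\[
F'(y) \;\ge\; F(y) \;\ge\; F(x_t) + \langle \hat{g}_t,\, y-x_t\rangle \;=\; F'(x_t) + \langle \hat{g}_t,\, y-x_t\rangle \qquad \forall y,
\]
so $\hat{g}_t \in \partial F'(x_t)$. This is the step I regard as the main (mild) obstacle: the subgradient inequality transfers from $F$ to $F'$ only because $F'$ agrees with $F$ at the base point $x_t$, not merely dominates it — and this agreement is exactly what \eqref{eq:queriedApprox} secures.

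Finally, I would bound $\|F'-f\|_\infty$. The lower bound $F'(x) \ge f(x)-\delta$ is automatic from the definition of the max. For the upper bound, \eqref{eq:underApprox} yields $F(x) \le f(x)+\delta$, while $f(x)-\delta \le f(x)+\delta$ is trivial, so the larger of the two is at most $f(x)+\delta$. Combining gives $|F'(x)-f(x)| \le \delta$ everywhere, completing the proof.
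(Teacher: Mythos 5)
Your proof is correct and follows essentially the same route as the paper: verify directly that $F'(x)=\max\{F(x),f(x)-\delta\}$ is convex and $M'$-Lipschitz, use \eqref{eq:queriedApprox} to see $F'$ agrees with $F$ at the queried points so the function values and subgradients carry over, and use \eqref{eq:underApprox} for the sup-norm bound. Your phrasing of the subgradient step (combining $F\le F'$ with $F'(x_t)=F(x_t)$) is a clean restatement of the paper's argument, which says the same thing modulo a harmless sign typo.
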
 
  \begin{proof}
  	Define the function $F'$ as $F'(x) := \max\{f(x)- \delta , F(x)\}$ by taking the maximum between $F$ and a downward-shifted $f$. We will show that this function is the desired convex extension of the first-order pairs $(\hat{f}_1,\hat{g}_1),\ldots,(\hat{f}_T,\hat{g}_T)$. 
		 
		 First, to show $\norm{F'-f}_\infty \leq \delta$, by the definition of $F'$ one has $F'(x) \geq f(x) - \delta$ for all $x$. Furthermore, because of the guarantee that $F(x) \leq f(x) + \delta$, we also have $F'(x) \leq f(x)+\delta$ for all $x$; together these imply that $\norm{F'-f}_\infty \leq \delta$. Since $f$ and $F$ are $M'$-Lipschitz convex functions, so is $F'$.
		 
		  It remains to be shown that $F'$ is an extension of the first-order pairs, that is, to show $\hat{f}_t = F'(x_t)$ and $\hat{g}_t \in \partial F'(x_t)$ for all $t = 1, ..., T$. Given property \eqref{eq:queriedApprox}, we have $F(x_t) \ge f(x_t) -  \delta$, and so $F'(x_t) = F(x_t) = \hat{f}_t$. The fact that $F'(x_t) = F(x_t)$ also implies that every vector in $\partial F(x_t)$ is a subgradient of $F'$ at $x_t$, namely $\partial F'(x_t) \supseteq \partial F(x_t) \ni \hat{g}_t$. To see this, recall that since $F$ is convex,  for $\hat{g}_t\in \partial F(x_t)$ we have $F(x_t) + \ip{\hat{g}_t}{x_t - x}\leq F(x)$. Using the fact that $F'(x_t) = F(x_t)$, we thus have $F'(x_t) + \ip{\hat{g}_t}{x_t - x}\leq F(x) \leq F'(x)$ for all $x$, and so any $\hat{g}_t\in \partial F(x_t)$ is also a subgradient for $F'$ at $x_t$, as desired to conclude the proof.  
  \end{proof}
	
	Given Lemma~\ref{lemma:finalFunc}, to prove Theorem \ref{thm:LipExt} it suffices to do the following. Consider a sequence of points $x_1,\ldots,x_T \in B(R)$. Using an $\eta$-approximate first-order oracle to access the function $f$ (at the points $x_1,\ldots,x_T$), we need to produce a sequence of first-order pairs $(\hat{f}_1,\hat{g}_1),\ldots,(\hat{f}_T,\hat{g}_T)$ in an online fashion that have an $M$-extension $F$ achieving the approximations \eqref{eq:underApprox} and \eqref{eq:queriedApprox}. We do this as follows. 
	
	At iteration $t$ we maintain the function $F_t(x) := \max\{\hat{f}_{\tau} + \ip{\hat{g}_{\tau}}{x - x_{\tau}} \,:\, \tau \le t\}$, that is, the maximum of the linear functions induced by the first-order pairs $(\hat{f}_{\tau}, \hat{g}_{\tau})$ constructed up to this point. We would like to define the pairs $(\hat{f}_{\tau}, \hat{g}_{\tau})$ to guarantee that for all $t$, $F_t$ is an $M'$-extension for these pairs, and satisfies \eqref{eq:underApprox} and \eqref{eq:queriedApprox} for $x_1,\ldots,x_t$. In this case, $F = F_T$ gives the desired function. 
	
	For that, suppose the above holds for $t-1$; we will show how to define $(\hat{f}_t,\hat{g}_t)$ to maintain this invariant for $t$. We should think of constructing $F_t$ by taking the maximum of $F_{t-1}$ and a new linear function $\hat{f}_t + \ip{\hat{g}_t}{x - x_t}$. To ensure that $F_t$ is an extension of the first-order pairs thus far, we need to make sure that:
	
	\begin{enumerate}
		\item 
		$\hat{f}_t \ge F_{t-1}(x_t).$
		This is necessary to ensure that $F_t(x_t) = \hat{f}_t$, and also guarantees $\hat{g}_t \in \partial F_t(x_t)$. \vspace{4pt}
		
		\item 
  $\hat{f}_t + \ip{\hat{g}_t}{x_{\tau} - x_t} \le F_{t-1}(x_\tau), ~~~~~\forall \tau \le t-1.
		$	
		This is necessary to ensure that $F_t(x_\tau) = F_{t-1}(x_\tau) = \hat{f}_{\tau}$, and also guarantees $\partial F_t(x_\tau) \supseteq \partial F_{t-1}(x_\tau) \ni \hat{g}_{\tau}$.
	\end{enumerate}
	
	To construct $(\hat{f}_t,\hat{g}_t)$ with these properties, we probe the approximate first-order oracle for $f$ at $x_t$, and receive an answer $(\tilde{f}_t,\tilde{g}_t)$. If setting $(\hat{f}_t,\hat{g}_t) = (\tilde{f}_t,\tilde{g}_t)$ violates the first item above, we simply use the first-order information of $F_{t-1}$ at $x_t$, i.e., we set $\hat{f}_t = F_{t-1}(x_t)$ and $\hat{g}_t \in \partial F_{t-1}(x_t)$. 
	
	If the second item above is violated instead, we shift the value $\tilde{f}_t$ down as little as possible to ensure the desired property, i.e., we set $(\hat{f}_t, \hat{g}_t) = (\tilde{f}_t - s^*, \tilde{g}_t)$ for appropriate $s^* > 0$. With this shifted value, the first item may now be violated, in which case we again just use the current first-order information of $F_{t-1}$. 
	
	These steps are formalized in the following procedure.

%
%
%
%
%

	\begin{mdframed}
    \begin{proc}\label{proc:lipExt} $\phantom{x}$

    Set $F_0(x) = -\infty$. 
    
    For each $t=1,\ldots,T$:

    \vspace{-6pt}
    \begin{enumerate}[leftmargin=18pt]    
        \item \vspace{-4pt} Query the $\eta$-approximate oracle for $f$ at $x_t$, receiving the first-order pair $(\tilde{f}_t, \tilde{g}_t)$. 
        
        \item \vspace{-6pt} Let $s^* := \min\{ s \ge 0 : \tilde{f}_t - s + \ip{\tilde{g}_t}{x_{\tau} - x_t} \le F_{t-1}(x_\tau), \forall \tau \le t-1\}.$ 

        \item
            Let $F_t(x) = \max\{ F_{t - 1}(x), \tilde{f}_t + \langle \tilde{g}_t, x - x_t \rangle - s^* \}$, and then set $\hat{f}_t = F_t(x_t)$, $\hat{g}_t \in \partial F_t(x_t)$.
        
        
        
    \end{enumerate}
		\end{proc}
	\end{mdframed}
    We remark that this requires storing historical values of $\tilde{f}_t$ and $\tilde{g}_t$ (this seems unavoidable to ensure convexity of $F_t$). In terms of computational complexity, we remark that the procedure takes a total of $O(T^2 d)$ operations. We now prove that the functions $F_t$'s have the desired properties. 
 

	\begin{lemma}\label{lemma:FtExt}
    For every $t = 1,\ldots,T$, the function $F_t$ is an $(M+\frac{\eta}{2 R})$-extension of the first-order information pairs $(\hat{f}_1,\hat{g}_1),\ldots,(\hat{f}_t,\hat{g}_t)$.
	\end{lemma}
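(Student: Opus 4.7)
The plan is to prove the lemma by induction on $t$. The base case $t=1$ is immediate: with $F_0 \equiv -\infty$, the constraint set defining $s^*$ is vacuous so $s^* = 0$, giving $F_1(x) = \tilde{f}_1 + \langle \tilde{g}_1, x - x_1\rangle$, which is an affine function. By construction $\hat{f}_1 = F_1(x_1) = \tilde{f}_1$ and $\hat{g}_1 \in \partial F_1(x_1) = \{\tilde{g}_1\}$. The Lipschitz bound follows from the oracle guarantee: since $\|\tilde{g}_1 - g\| \le \eta/(2R)$ for some $g \in \partial f(x_1)$ with $\|g\| \le M$ (as $f$ is $M$-Lipschitz), we get $\|\tilde{g}_1\| \le M + \eta/(2R)$.

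For the inductive step, assume $F_{t-1}$ is an $(M + \eta/(2R))$-extension of $(\hat{f}_1, \hat{g}_1),\ldots,(\hat{f}_{t-1}, \hat{g}_{t-1})$. First I would verify $s^*$ is well-defined: the set $\{s \ge 0 : \tilde{f}_t - s + \langle \tilde{g}_t, x_\tau - x_t\rangle \le F_{t-1}(x_\tau),\, \forall \tau \le t-1\}$ is nonempty and closed below by $0$, since for sufficiently large $s$ all inequalities hold and the constraints are linear in $s$, so the minimum is attained. Convexity and the Lipschitz bound for $F_t$ then follow immediately: $F_t$ is a pointwise maximum of $F_{t-1}$ (convex by induction) and an affine function with slope $\tilde{g}_t$ of norm at most $M + \eta/(2R)$, so $F_t$ is convex and $(M+\eta/(2R))$-Lipschitz.

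The heart of the proof is the extension property at all queried points $x_1,\ldots,x_t$. For $\tau = t$, this is by construction: $\hat{f}_t = F_t(x_t)$ and $\hat{g}_t$ is chosen from $\partial F_t(x_t)$. For $\tau \le t-1$, by the definition of $s^*$ the new affine piece satisfies $\tilde{f}_t - s^* + \langle \tilde{g}_t, x_\tau - x_t\rangle \le F_{t-1}(x_\tau)$, and by the induction hypothesis $F_{t-1}(x_\tau) = \hat{f}_\tau$. Thus $F_t(x_\tau) = \max\{F_{t-1}(x_\tau), \tilde{f}_t - s^* + \langle \tilde{g}_t, x_\tau - x_t\rangle\} = F_{t-1}(x_\tau) = \hat{f}_\tau$, preserving the function-value match. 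The subgradient inclusion $\hat{g}_\tau \in \partial F_t(x_\tau)$ then follows from a general principle: whenever two convex functions $G \le H$ agree at a point $y$ (here $G = F_{t-1} \le F_t = H$ with equality at $x_\tau$), one has $\partial G(y) \subseteq \partial H(y)$, because the subgradient inequality $G(y) + \langle g, x - y\rangle \le G(x) \le H(x)$ combined with $G(y) = H(y)$ gives $H(y) + \langle g, x - y\rangle \le H(x)$.

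The main obstacle I anticipate is purely bookkeeping rather than conceptual: one must be careful that the choice $\hat{g}_t \in \partial F_t(x_t)$ (which may differ from $\tilde{g}_t$ when the new affine piece does not dominate $F_{t-1}$ at $x_t$) still has norm at most $M + \eta/(2R)$. This is handled automatically because any subgradient of an $(M+\eta/(2R))$-Lipschitz function has norm bounded by $M+\eta/(2R)$, so no separate argument is required. With these observations in place the induction closes cleanly.
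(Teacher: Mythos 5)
Your proof is correct and takes essentially the same approach as the paper's: induction on $t$, convexity and the Lipschitz bound via the max-of-affine decomposition, the $s^*$ constraint to preserve the extension property at earlier queried points, and the observation that the pair at $x_t$ is an extension by construction. If anything, your handling of the $\tau = t$ case is slightly cleaner, since you use directly that $\hat{f}_t := F_t(x_t)$ and $\hat{g}_t \in \partial F_t(x_t)$ are defined from $F_t$, whereas the paper phrases this via a case analysis on which piece of the maximum dominates at $x_t$.
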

	
	\begin{proof}
		Since $F_t$ is the maximum over affine functions, it is convex. Moreover, all of its subgradients come from the set $\{\tilde{g}_\tau\}_\tau$, and by the approximation guarantee of the oracle we have that for some subgradient $g \in \partial f(x_\tau)$, $\|\tilde{g}\|_2 \le \|\tilde{g} - g\|_2 + \|g\|_2 \le \frac{\eta}{2 R} + M$, where we used that fact that $f$ is $M$-Lipschitz; thus, $F_t$ is $(M+\frac{\eta}{2 R})$-Lipschitz. 
		
		We prove by induction on $t$ that $F_t$ is an extension of the desired pairs (the base case $t=1$ can be readily verified). Recall $F_t(x_t) = \max\{F_{t-1}(x), H(x)\}$, where $H(x) := \hat{f}_t + \ip{\hat{g}}{x - x_t}$. By the definition of $s^*$, for all $x = x_1,\ldots,x_{t-1}$, this maximum is achieved by the function $F_{t-1}$, giving, by induction, that for all $\tau \le t-1$, $F_t(x_{\tau}) = F_{t-1}(x_\tau) = \hat{f}_\tau$; this also implies that for such $\tau$'s, $\partial F_t(x_\tau) \supseteq \partial F_{t-1}(x_\tau) \ni \hat{g}_{\tau}$, the last inclusion again following by induction. These give the extension property for the pairs $(\hat{f}_\tau, \hat{g}_{\tau})$ with $\tau \le t-1$. 
		
		It remains to verify that this also holds for $\tau = t$. Now the maximum in the definition of $F_t(x_t)$ is achieved by the function $H$: if $\tilde{f}_t - s^* \ge F_{t-1}(x_t)$, the procedure sets $\hat{f}_t = \tilde{f}_t -s^*$ and we have $H(x_t) = \hat{f}_t \ge F_{t-1}(x_t)$; otherwise the procedure sets $\hat{f}_t = F_{t-1}(x_t)$ and so $H(x_t) = F_{t-1}(x_t)$. Again this implies that $\partial F_t(x_t) \supseteq \partial H(x_t) = \{\hat{g}_t\}$. This concludes the proof of the lemma. 		
	\end{proof}

	Finally, we show that the functions $F_t$ approximate $f$ according to \eqref{eq:underApprox} and \eqref{eq:queriedApprox}. 
	
	\begin{lemma} \label{lemma:FtApprox}
		For every $t = 1,\ldots,T$, the $F_t$ satisfies inequalities \eqref{eq:underApprox} and \eqref{eq:queriedApprox} with $\delta = 2 \eta t$. 
	\end{lemma}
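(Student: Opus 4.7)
The plan is to prove both inequalities simultaneously by induction on $t$, with the base case $t=0$ being vacuous since $F_0 \equiv -\infty$ and there are no queried points yet. The case $t=1$ is easily verified from the definitions using $s^* = 0$ (the constraint in the definition of $s^*$ is empty), reducing to the direct oracle bounds.

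For the inductive step, assume $F_{t-1}$ satisfies \eqref{eq:underApprox} and \eqref{eq:queriedApprox} with $\delta = 2\eta(t-1)$. For the under-approximation \eqref{eq:underApprox}, I will write $F_t(x) = \max\{F_{t-1}(x), L_t(x)\}$ where $L_t(x) := \tilde f_t + \langle \tilde g_t, x - x_t\rangle - s^*$. The first argument is bounded by $f(x) + 2\eta(t-1) \le f(x) + 2\eta t$ via induction. For $L_t$, I will combine the oracle guarantees $\tilde f_t \le f(x_t) + \eta$ and $\langle \tilde g_t - g_t, x - x_t\rangle \le \frac{\eta}{2R}\cdot 2R = \eta$ (using $\|x - x_t\| \le 2R$ since both lie in $B(R)$) with the subgradient inequality $\langle g_t, x - x_t\rangle \le f(x) - f(x_t)$ and with $s^* \ge 0$, obtaining $L_t(x) \le f(x) + 2\eta \le f(x) + 2\eta t$ for $t \ge 1$.

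For \eqref{eq:queriedApprox} at $\tau < t$, Lemma~\ref{lemma:FtExt} already gives $F_t(x_\tau) = \hat f_\tau = F_{t-1}(x_\tau)$, so the bound follows directly by induction. The subtle case is $\tau = t$, for which $F_t(x_t) \ge L_t(x_t) = \tilde f_t - s^*$, so it suffices to lower-bound $\tilde f_t - s^*$. If $s^* = 0$ this is immediate from $\tilde f_t \ge f(x_t) - \eta$. If $s^* > 0$, I will use the fact that $s^*$ is a minimum, so some constraint is tight at an index $\tau^* \le t-1$: $\tilde f_t - s^* = F_{t-1}(x_{\tau^*}) - \langle \tilde g_t, x_{\tau^*} - x_t\rangle$. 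Applying the inductive lower bound $F_{t-1}(x_{\tau^*}) \ge f(x_{\tau^*}) - 2\eta(t-1)$ together with $\langle \tilde g_t, x_{\tau^*} - x_t\rangle \le \langle g_t, x_{\tau^*} - x_t\rangle + \eta \le f(x_{\tau^*}) - f(x_t) + \eta$ (from the oracle's subgradient bound and the subgradient inequality at $x_t$) will yield $F_t(x_t) \ge f(x_t) - (2t-1)\eta \ge f(x_t) - 2\eta t$.

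The main obstacle is precisely the case $\tau = t$. A naive route that bounds $s^*$ by itself (one easily shows $s^* \le 2\eta t$) and combines this with $\tilde f_t \ge f(x_t) - \eta$ only gives $F_t(x_t) \ge f(x_t) - (2t+1)\eta$, missing the target by one $\eta$. The key trick is to avoid a standalone bound on $s^*$ and instead exploit the tightness of the defining constraint at some $\tau^*$: expressing $\tilde f_t - s^*$ through this tight constraint lets the $f(x_{\tau^*})$ terms cancel after applying the subgradient inequality at $x_t$, producing a bound in terms of $f(x_t)$ with exactly the right constant.
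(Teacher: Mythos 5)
Your proposal is correct, and the chain of inequalities you use is ultimately the same as the paper's; only the bookkeeping is organized differently. The paper does not use the ``naive'' bound $s^* \le 2\eta t$ that you warn against. Instead it introduces $\Delta := \tilde f_t - f(x_t)$ and proves the $\Delta$-dependent bound $s^* \le \max\{0,\,\Delta + 2\eta t\}$; substituting this into $\tilde f_t - s^*$ makes the $\Delta$ cancel against $\tilde f_t - f(x_t)$, giving $F_t(x_t) \ge f(x_t) - 2\eta t$ without any loss. So the paper does give a ``standalone'' bound on $s^*$, just one that carries $\Delta$ along; the cancellation you obtain by expressing $\tilde f_t - s^*$ via the tight constraint at $\tau^*$ is exactly the same cancellation, achieved by regrouping. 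Your tightness route is a perfectly valid alternative (and in fact yields the marginally sharper constant $(2t-1)\eta$), but it does require one extra sentence justifying that the minimum in the definition of $s^*$ is attained and that some constraint is tight when $s^* > 0$; the paper's $\Delta$-formulation sidesteps this by working directly with the inequality $\tilde f_t + \langle\tilde g_t, x_\tau - x_t\rangle \le F_{t-1}(x_\tau) + \Delta + 2\eta t$ for all $\tau$. Everything else in your proposal — the induction structure, the bound on $L_t(x)$ for the under-approximation, and handling $\tau < t$ via the extension lemma — matches the paper's argument.
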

	
	\begin{proof}
		Again we prove this by induction on $t$. Fix $t$. Let $\Delta := \tilde{f}_t - f(x_t)$ be the error the inexact oracle makes on the function value. We claim that the shift $s^*$ used in iteration $t$ of Procedure~\ref{proc:lipExt} satisfies $s^* \le \max\{0, \Delta + 2 \eta t \}$. To see this, the $\eta$-approximation of the oracle guarantees that there is a subgradient $g \in \partial f(x_t)$ such that $\|\tilde{g}_t - g\|_2 \le \frac{\eta}{2R}$, and so for every $\tau \le t-1$
		\begin{align}
			&\tilde{f}_t + \ip{\tilde{g}_t}{x_\tau - x_t}\nonumber\\
   =& 	\Delta + \underbrace{f(x_t) + \ip{g}{x_\tau - x_t}}_{\le f(x_\tau)} + \underbrace{\ip{\tilde{g}_t - g}{x_\tau - x_t}}_{\le \|\tilde{g}_t - g\|_2 \|x_\tau - x_t\|_2 \le \eta}\nonumber\\ 
   \le& F_{t-1}(x_\tau) + \Delta + 2 t \eta, \label{eq:FTApprox1}
		\end{align}
		the first underbrace following since $g$ is a subgradient of $f$, and the last inequality following from the induction hypothesis $F_{t-1}(x_\tau) \ge f(x_\tau) - 2 (t-1) \eta$ (inequality \eqref{eq:queriedApprox}); the optimality of $s^*$ then guarantees that it is at most $\max\{0, \Delta + 2\eta t\}$, proving the claim. 
				
		Now we show that $F_t$ satisfies the desired bounds, namely $F_t(x_\tau) \ge f(x_\tau) - 2 \eta t$ for all $\tau \le t$, and $F_t(x) \le f(x) + 2 \eta t$ for all $x \in B(R)$. 
		From the inductive hypothesis, for $\tau \le t-1$ we have $F_t(x_\tau) \ge F_{t-1}(x_\tau) \ge f(x_\tau) - 2\eta (t-1)$, giving the first bound for these $x_\tau$. For $x_t$, notice that $F_t(x_t) \ge \tilde{f}_t - s^*$.
  Therefore,   \begin{align*}
       F_t(x_t) &\ge \tilde{f}_t - s^* \ge \tilde{f}_t - \max\{0, \Delta - 2\eta t\} \\
       &\ge \max\{f(x_t) - \eta\,,\, f(x_t) - 2 \eta t\} = f(x_t) - 2 \eta t,
  \end{align*}
  where in the second inequality we used the upper bound on the shift $s^* \le \max\{0, \Delta + 2\eta t\}$, and in the next inequality we used the guarantee $|\tilde{f}_t - f(x_t)| \le \eta$ from the approximate oracle.
		
		For the upper bound $F_t(x) \le f(x) + 2 \eta t$, by the inductive hypothesis $F_{t-1}(x) \le f(x) + 2 \eta (t-1)$. Moreover, the same development as in \eqref{eq:FTApprox1} reveals that  \begin{align*}
      \tilde{f}_t + \ip{\tilde{g}_t}{x - x_t} - s^* &\le \tilde{f}_t + \ip{\tilde{g}_t}{x - x_t}\\
      &\le f(x) + \Delta + \eta \le f(x) + 2 \eta,
  \end{align*}
  where the last inequality again uses that $\Delta = \tilde{f}_t - f(x_t) \le \eta$ due to the guarantee of the approximate oracle. Thus, $F_t(x) \le \max\{f(x) + 2\eta (t-1), f(x) + 2\eta \} \le f(x) + 2 \eta t$, giving the desired bound. This concludes the proof of the lemma. 		
  
	\end{proof}
	
	Combining Lemmas \ref{lemma:finalFunc}, \ref{lemma:FtExt}, and \ref{lemma:FtApprox} shows that the first-order pairs produced by Procedure \ref{proc:lipExt} satisfies the properties stated in Theorem \ref{thm:LipExt}, finally concluding its proof. 
	

\section{Universal transfer for smooth functions} \label{sec:transfSmooth}

    
    In this section we prove our transfer theorem for smooth functions stated in Theorem \ref{thm:smooth-main}. Recall that a function $f$ is $\alpha$-smooth if it has $\alpha$-Lipschitz gradients:
\begin{align*}
    \forall x, y \in \mathbb{R}^d, \quad \|\nabla f(x) - \nabla f(y)\| \leq \alpha \|x - y\|.
\end{align*}

    As in the proof of the previous transfer theorem, the core element is the following: Given the sequence of iterates $x_1, \ldots, x_t$ of a black-box optimization algorithm and access to an approximate first-order oracle to the smooth objective function $f$, construct in an online fashion first-order pairs $(\hat{f}_t, \hat{g}_t)$ and, implicitly, a \emph{smooth} function $S$ close to the original $f$ such that $(\hat{f}_t, \hat{g}_t)$ provide exactly the value and gradient of $S$ at $x_t$.
    
	\begin{theorem}[Online first-order smooth-extensibility] \label{thm:smoothExt}
		Consider an $\alpha$-smooth, $M$-Lipschitz convex function $f : \R^d \rightarrow \R$, and a sequence of points $x_1,\ldots,x_T \in B(R)$. Then, for $\eta \le \frac{\alpha R^2}{5T}$, there is an online procedure that given $\eta$-approximate first-order oracle access to $f$, produces first-order pairs $(\hat{f}_1,\hat{g}_1),\ldots,(\hat{f}_T,\hat{g}_T)$ that have an $\alpha'$-smooth $(M+\frac{\eta}{2 R})$-extension $S : B(R) \rightarrow \R$ satisfying $\|S - f\|_{\infty} \le 5 \eta \, (T+2)$, where $\alpha' = \alpha \, \sqrt{d} \, \Big(4 \sqrt{5 \cdot (T+1)} + 3\Big)$. Moreover, the procedure only probes the approximate oracle at the given points $x_1,\ldots,x_T$.
	\end{theorem}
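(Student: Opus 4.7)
The plan is to adapt the proof of Theorem \ref{thm:LipExt} by adding a smoothing step. First, I would run Procedure \ref{proc:lipExt} on the $\eta$-approximate oracle to produce intermediate first-order pairs together with an $(M + \eta/(2R))$-Lipschitz convex extension $F : B(R) \to \R$ satisfying $\|F - f\|_\infty \le 2\eta T$. Since $F$ is only piecewise linear, I would then smooth it via Gaussian convolution $S(x) := \mathbb{E}_{\xi \sim N(0,\sigma^2 I)}[F(x + \xi)]$, which preserves convexity, preserves the Lipschitz constant $M + \eta/(2R)$, and turns piecewise linearity into $((M + \eta/(2R))\sqrt{d}/\sigma)$-smoothness (the Hessian bound follows from differentiating under the integral and using $\mathbb{E}\|\xi\| \le \sigma\sqrt{d}$). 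Choosing $\sigma := (M + \eta/(2R))/(\alpha(4\sqrt{5(T+1)} + 3))$ exactly matches the target smoothness $\alpha'$, and the reported pairs would be $(\hat{f}_t, \hat{g}_t) := (S(x_t), \nabla S(x_t))$, which $S$ then extends by construction.

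The accuracy bound $\|S - f\|_\infty \le 5\eta(T+2)$ is the main quantitative step. I would decompose
\[
S(x) - f(x) \;=\; \mathbb{E}\bigl[F(x+\xi) - f(x+\xi)\bigr] \;+\; \mathbb{E}\bigl[f(x+\xi) - f(x)\bigr],
\]
where the first term is at most $\|F - f\|_\infty \le 2\eta T$, and the second term crucially exploits the $\alpha$-smoothness of $f$: since $\mathbb{E}[\xi] = 0$, it equals $\mathbb{E}[f(x+\xi) - f(x) - \nabla f(x)^\top \xi] \le \alpha d\sigma^2/2$. Substituting the chosen $\sigma$ and using the hypothesis $\eta \le \alpha R^2/(5T)$ together with $M \le 2\alpha R$ (which holds because $x^\star \in B(R)$ and $f$ is $\alpha$-smooth) bounds the second term by $O(\eta T)$ as well, yielding the claimed total. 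The $\sqrt{d}$ factor in $\alpha'$ is the standard dimension factor of Gaussian smoothing, and the $\sqrt{T+1}$ growth is dictated by the need for $\sigma$ to shrink with $T$ so as to balance the smoothing error against $\|F - f\|_\infty \le 2\eta T$.

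The hardest part is the online requirement: the naive definition $(\hat{f}_t, \hat{g}_t) := (S(x_t), \nabla S(x_t))$ uses the final extension $F = F_T$ and thus depends on future queries. I plan to resolve this by exploiting two facts: (a) the sequence $F_1 \le F_2 \le \cdots \le F_T$ produced by Procedure \ref{proc:lipExt} is pointwise nondecreasing in $t$; and (b) a maximal coupling of the Gaussian noise used to define $S_t(x_t) = \mathbb{E}[F_t(x_t+\xi)]$ and $S(x_t) = \mathbb{E}[F_T(x_t+\xi)]$ (sampling the same $\xi$ in both) yields $|S_t(x_t) - S(x_t)| = \mathbb{E}[F_T(x_t+\xi) - F_t(x_t+\xi)]$, a non-negative quantity that can be expanded as a telescoping sum of the contributions of each future query. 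Each such query raises the extension locally by at most $O(\eta)$, and the Gaussian mass concentrated near the raise region decays rapidly enough that the total discrepancy telescopes to $O(\eta T)$, which is absorbed into the $5\eta(T+2)$ slack; a localized (e.g., truncated-Gaussian) kernel may be needed to make this quantitative. Showing that the pairs committed online via this scheme remain jointly consistent with a single global $\alpha'$-smooth convex extension is the most delicate step, and is exactly where the ``localized smoothing techniques and maximal couplings'' alluded to in the introduction come in.
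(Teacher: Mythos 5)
Your plan is in the right spirit (smooth the piecewise-linear $F_t$, then argue the smoothed object both matches the reported pairs and stays near $f$), but there are two quantitative gaps that it does not get past, and both are precisely the points where the paper's argument differs from the generic story.

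\textbf{The generic smoothing bound does not give the stated $\alpha'$.} You use the standard fact that smoothing an $L$-Lipschitz function with parameter $\sigma$ yields an $L\sqrt{d}/\sigma$-smooth function, and then choose $\sigma$ so that $L\sqrt{d}/\sigma = \alpha'$. That forces $\sigma \approx M/(\alpha\sqrt{T})$, which does not shrink with $\eta$. The smoothing bias term you write, $\alpha d\sigma^2/2$, then equals roughly $dM^2/(2\alpha T)$ — a quantity that is \emph{independent of $\eta$}. The hypothesis $\eta \le \alpha R^2/(5T)$ gives a \emph{lower} bound on $\alpha R^2/(5T)$ relative to $\eta$, not an upper bound, so it cannot be used to show $\alpha d\sigma^2/2 \le O(\eta T)$; for $\eta$ sufficiently small the bias dominates $5\eta(T+2)$ and the claim fails. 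The paper escapes this trap by proving a \emph{strengthened} smoothing lemma (Lemma~\ref{lemma:smoothing}): if the nonsmooth function $h$ is within $\e$ of an $\alpha$-smooth function, then the smoothed $h_r$ is $\big(\tfrac{4\sqrt{\alpha\e d}}{r}+3\alpha\sqrt d\big)$-smooth, a bound that \emph{degrades with $\e$} rather than with the Lipschitz constant. Plugging $r=\sqrt{\eta/\alpha}$ and $\e\approx 5\eta T$ makes the $\eta$-dependence cancel in the smoothness bound while the bias $\e + \alpha r^2/2$ stays $O(\eta T)$. Without this lemma the arithmetic simply does not close.

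\textbf{The online-consistency argument gives only approximate, not exact, consistency.} You correctly identify that making the reported pairs agree with a single final $S$ is the crux, but your coupling/telescoping plan bounds $|S_t(x_t)-S(x_t)|$ by $O(\eta T)$ rather than proving equality. The theorem needs \emph{exact} equalities $\hat f_t = S(x_t)$, $\hat g_t = \nabla S(x_t)$: the whole transfer argument rests on the execution of $\cA$ being literally identical to a run on $S$, and an $O(\eta T)$ slippage at each reported pair would ruin that. The paper achieves exactness by (i) using a compactly supported smoothing kernel (uniform on a ball of radius $r$), so that $(F_t)_r(x_t)$ depends on $F_t$ only on $B(x_t,r)$; and (ii) modifying the construction of $F_t$ (this is Procedure~\ref{proc:smoothExt}, not Procedure~\ref{proc:lipExt}) with an extra, growing downward shift $s_t+2\eta$ that guarantees later affine pieces never rise above $F_t$ on $B(x_\tau,\sqrt2\,r)$ for $\tau\le t$ — the ``ball protection'' invariant of Lemma~\ref{lemma:smoothFt}, Item 3. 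Feeding Procedure~\ref{proc:lipExt}'s output through a Gaussian is not enough: Procedure~\ref{proc:lipExt} protects the values at the points $x_\tau$ themselves but nothing in their neighborhoods, and a Gaussian sees the whole line, so later updates really do change $S(x_\tau)$. The maximal-coupling tool you invoke is used in the paper, but in the proof of the smoothness bound (Lemma~\ref{lemma:smoothing}/\ref{lemma:totalVarBall}), not to patch up the online consistency.

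So the proposal is missing the two ideas that carry the proof: the \emph{near-smoothness-aware} smoothing estimate, and the \emph{local} smoothing kernel paired with an $F_t$-construction that protects whole balls around each query. Either omission alone would sink the argument.
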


    In the previous section, the extension was created by adding a new linear function at every iteration; this produced the piecewise linear (non-smooth) functions $F_t$ in the previous section. Having to construct a \emph{smooth} extension creates a challenge. Our approach is to apply a \emph{smoothing} procedure to these piecewise linear functions, in an online manner. One issue is that most standard smoothing procedures (e.g., via inf-convolution~\cite{beckTeboulle} or Gaussian smoothing~\cite{Nesterov2005_Smooth_minimization_of_nonsmooth}) may use the values of the non-smooth base function over the whole domain; in our online construction, at a given point in time we have determined the value of the function only in a neighborhood of the previous iterates, and the updated functions can change at points outside these small neighborhoods. Thus, we employ a localized smoothing procedure. Moreover, we need the procedure to leverage the fact that the non-smooth base function is close to a smooth one, and produce stronger smoothing guarantees by making use thereof. 
    We start by describing this smoothing technique and its properties, and then describe the full procedure that gives Theorem \ref{thm:smoothExt}. 
    

    \paragraph{Randomized smoothing of almost smooth functions.} Given a function $h : \R^d \rightarrow \R$ and a radius $r > 0$, we define the smoothed function $h_r$ by $h_r(x) :=  \E h(x + r U),$ where $U$ is uniformly distributed on the unit ball $B(1)$. It is well-known that when $h$ is convex and $M$-Lipschitz, then $h_r$ is differentiable, also $M$-Lipschitz, and, most importantly, is $\frac{M \sqrt{d}}{r}$-smooth~\cite{YOUSEFIAN201256}. However, we show that the smoothing parameter can be significantly improved when the function $h$ is already close to a smooth function. The proof is deferred to Appendix \ref{sec:randSmooth}.

    \begin{lemma} \label{lemma:smoothing}
        Let $h : B(4R) \rightarrow \R$ be a convex function such that there exists an $\alpha$-smooth convex function $f : B(4R) \rightarrow \R$ with $\|h-f\|_{\infty} \le \e$, for $\e \le \alpha R^2$. Then, for $r \le R$ the smoothed function $h_r : B(R) \rightarrow \R$ (so restricted to the ball $B(R)$) satisfies:
        
    \vspace{-10pt}
    \begin{enumerate}
    	\item $h_r$ is $\Big(\frac{4 \sqrt{\alpha \e d}}{r} + 3\alpha \sqrt{d}\Big)$-smooth \vspace{-6pt}
    	\item $|h_r(x) - f(x)| \le \e + \frac{\alpha r^2}{2}$ for all $x \in B(R)$.
    \end{enumerate}
    \end{lemma}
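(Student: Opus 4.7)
The plan is to prove the two items separately, leveraging the decomposition $h = f + (h-f)$ with $\|h-f\|_\infty \le \e$ together with the $\alpha$-smoothness of $f$, always arranging for every evaluation point of $h$ or $f$ to stay inside $B(4R)$.

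Item~2, the pointwise approximation, is straightforward. For $x \in B(R)$ and $U \in B(1)$ the point $x+rU$ lies in $B(2R) \subset B(4R)$. Splitting
\[
h_r(x) - f(x) \;=\; \E\bigl[h(x+rU) - f(x+rU)\bigr] \;+\; \E\bigl[f(x+rU) - f(x)\bigr],
\]
the first expectation is bounded by $\e$ by hypothesis; the second, using $\E U = 0$, equals $\E\bigl[f(x+rU) - f(x) - \langle \nabla f(x), rU\rangle\bigr]$, whose absolute value is at most $\tfrac{\alpha}{2}\|rU\|^2 \le \tfrac{\alpha r^2}{2}$ by $\alpha$-smoothness of $f$.

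For item~1, the crux is an \emph{approximate smoothness claim} for $h$: for every $z \in B(2R)$ and every subgradient $p \in \partial h(z)$, $\|p - \nabla f(z)\| \le 2\sqrt{\alpha\,\e}$. To prove it, fix a unit $v$ and $t > 0$ with $z + tv \in B(4R)$; convexity of $h$ gives $h(z+tv) \ge h(z) + t\langle p, v\rangle$, while $\alpha$-smoothness of $f$ gives $f(z+tv) \le f(z) + t\langle \nabla f(z), v\rangle + \tfrac{\alpha t^2}{2}$. Combining with $|h-f| \le \e$ yields $\langle p - \nabla f(z), v\rangle \le \tfrac{2\e}{t} + \tfrac{\alpha t}{2}$, and choosing $t = 2\sqrt{\e/\alpha}$ (which lies in $(0, 2R]$ exactly because $\e \le \alpha R^2$) produces the claim after supremizing over unit $v$.

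To deduce the smoothness bound, I would use the Stokes representation $\nabla h_r(x) = \tfrac{d}{r}\,\E_V[h(x+rV)\,V]$ with $V$ uniform on $S^{d-1}$, so
\[
\nabla h_r(x) - \nabla h_r(y) \;=\; \tfrac{d}{r}\,\E_V\bigl[(h(x+rV) - h(y+rV))\,V\bigr].
\]
For a measurable subgradient selection $p$, write $h(x+rV) - h(y+rV) = \int_0^1 \langle p(z_s + rV), x-y\rangle\,ds$ with $z_s := y + s(x-y) \in B(R)$, and use $\E V = 0$ to subtract $\nabla f(z_s)$ from $p(z_s+rV)$ inside the expectation. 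Cauchy--Schwarz for any unit test vector $u$, together with the isotropy identity $\E[\langle u, V\rangle^2] = \tfrac{1}{d}$, produces a factor $\tfrac{\sqrt{d}}{r}\|x-y\|$ multiplying $\sqrt{\E\|p(z_s+rV) - \nabla f(z_s)\|^2}$; bounding the latter pointwise by $2\sqrt{\alpha\,\e} + \alpha r$ via the approximate-smoothness claim (applicable since $z_s + rV \in B(2R)$) plus $\alpha$-smoothness of $f$ then yields $\|\nabla h_r(x) - \nabla h_r(y)\| \le \bigl(\tfrac{2\sqrt{\alpha\,\e\,d}}{r} + \alpha\sqrt{d}\bigr)\|x-y\|$, which is within the stated constants. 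The main obstacle is the approximate-smoothness claim: the hypothesis $\e \le \alpha R^2$ enters exactly there (to keep the optimal $t$ inside the domain) and upgrades the pointwise closeness $\|h-f\|_\infty \le \e$ into a pointwise $\sqrt{\alpha\,\e}$-closeness of subgradients, which is the quantitative improvement that drives the whole lemma.
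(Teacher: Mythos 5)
Your proof is correct, and for Item~2 and the ``approximate-smoothness'' estimate $\|\partial h(z) - \nabla f(z)\| \le 2\sqrt{\alpha\e}$ on $B(2R)$ you follow essentially the same path as the paper (the paper fixes a specific displacement $y$ while you optimize over the step length $t$, but it is the same inequality with the same constant). Where you genuinely diverge is in deducing the Lipschitz bound on $\nabla h_r$: the paper uses a \emph{maximal-coupling} argument, constructing a coupling $(X',Y')$ of the uniform distributions on $B(x,r)$ and $B(y,r)$ with $\Pr(X' \neq Y') \le \|x-y\|\sqrt{d}/r$, then bounding $\|\nabla h_r(x)-\nabla h_r(y)\| \le \E\|\partial h(X')-\partial h(Y')\|$ by the probability of decoupling times the worst-case subgradient gap; this requires first restricting to $\|x-y\|\le r$ and then chaining. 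You instead use the divergence-theorem (Stokes) representation $\nabla h_r(x) = \tfrac{d}{r}\E_V[h(x+rV)V]$, write the difference of $h$-values as a line integral of a subgradient selection, exploit $\E V=0$ to recenter by $\nabla f(z_s)$, and then apply Cauchy--Schwarz with the isotropy identity $\E\langle u,V\rangle^2=1/d$. This is valid for all $x,y\in B(R)$ without the chaining step, and because you only ever compare a subgradient of $h$ to the gradient of $f$ at a nearby point (rather than two subgradients of $h$ at points up to $3r$ apart), you obtain the sharper constant $\tfrac{2\sqrt{\alpha\e d}}{r}+\alpha\sqrt{d}$ versus the paper's $\tfrac{4\sqrt{\alpha\e d}}{r}+3\alpha\sqrt{d}$. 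The trade-off is that your route leans on the Stokes formula for spherical averages and the measurability/integrability of a subgradient selection along line segments (both standard, given that $h$ is locally Lipschitz on $B(4R)$), whereas the paper's coupling argument is more elementary and self-contained modulo the quoted total-variation bound.
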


	
    \paragraph{Construction of the smooth-extension.} 
    As mentioned, in each iteration $t$ we will maintain a piecewise linear function $F_t$ constructed very similarly to the proof of Theorem~\ref{thm:LipExt}. Now we will also maintain the smoothened version $(F_t)_r$ of this function that uses the randomized smoothing discussed above (for a particular value of $r$). Our transfer procedure then returns the first-order information $\hat{f}_t := (F_t)_r(x_t)$ and $\hat{g}_t := \nabla (F_t)_r(x_t)$ of the latter. The final smooth function $S : B(R) \rightarrow \R$ compatible with the first-order information returned by the procedure will be given, as in Lemma \ref{lemma:finalFunc}, by taking the maximum between the final $F_T$ and a shifted version of the original function $f$.

        The main difference in how the functions $F_t$'s are constructed, compared to the proof of Theorem~\ref{thm:LipExt}, is the following. Previously, in order to ensure that $F_T$ (and so the final extension) was compatible with the first-order pairs output in earlier iterations, we needed to ``protect'' the points $x_t$ and ensure that the function values and gradients at these points did not change over time, e.g., we needed $F_T(x_t) = F_t(x_t)$. But now the first-order pair output for the query point $x_t$ depends not only on the value of $F_t$ at $x_t$, but also on the values on the whole \emph{ball} $B(x_t,r)$ that are used to determine the smoothed function $(F_t)_r$ at $x_t$. Thus, we will now need to ``protect'' these balls and ensure that the function values over them do not change in later iterations. 

        We now formalize the construction of the functions $F_t$, the first-order information returned, and the final extension $S$ in Procedure \ref{proc:smoothExt}.  
        
		\begin{mdframed}
    \begin{proc}\label{proc:smoothExt} $\phantom{x}$

    Set $r = \sqrt{\eta/\alpha}$ and $F_0(x) = -\infty$. 
    
    For each $t=1,\ldots,T$:

    \vspace{-6pt}
    \begin{enumerate}[leftmargin=18pt]    
        \item \vspace{-4pt} Query the $\eta$-approximate oracle for $f$ at $x_t$, receiving the first-order pair $(\tilde{f}_t, \tilde{g}_t)$.   
        
				\item \vspace{-6pt} Define the function $F_t$ by setting $F_t(x) = \max\{F_{t-1}(x)\,,\, \tilde{f}_t + \ip{\tilde{g}_t}{x - x_t} - (4 \eta t + \alpha r^2 t + 2 \eta) \}$ for all $x$       
        
				\item \vspace{-6pt} Output the first-order information of the randomly smoothed function $(F_t)_r$: $\hat{f}_t := (F_t)_r(x_t)$ and $\hat{g}_t := \nabla (F_t)_r(x_t)$
    \end{enumerate}
 
    \vspace{-6pt}    
    Define the function $S : B(R) \rightarrow \R$ by $S = (\max\{F_T, f- 4 \eta (T+1) + \alpha r^2 (T+1)\})_r$, where $\max$ denotes pointwise maximum.
		\end{proc}
	\end{mdframed}
 
The proof that this procedure indeed yields Theorem \ref{thm:smoothExt} is presented in Appendix \ref{app:smoothExt}.

\section*{Acknowledgments}

We would like to thank the reviewers for their detailed and insightful feedback that has corrected inaccuracies in the previous proofs and have improved the presentation of the paper. 

The first and fourth authors would like to acknowledge support from Air Force Office of Scientific Research (AFOSR) grant FA95502010341 and National Science Foundation (NSF) grant CCF2006587. The second author was supported in part by the Coordena\c{c}\~ao de Aperfei\c{c}oamento de Pessoal de N\'ivel Superior (CAPES, Brasil) - Finance Code 001, and by Bolsa de Produtividade em Pesquisa $\#3$12751/2021-4 from CNPq. 

\section*{Impact Statement}
 This paper presents work whose goal is to advance the fields of Machine Learning and Optimization. There are many potential societal consequences of our work, none which we feel must be specifically highlighted here.

\small

{
\bibliographystyle{icml2024}
\bibliography{full-bib}
}

\bigskip
\appendix

{\LARGE \bf Appendix}

\section{Universal transfer for smooth functions}

    In this section we present the missing proofs for our transfer theorem for smooth functions from Section \ref{sec:transfSmooth}. We start by recalling the definition of a smooth function.

\begin{definition}
    A function $f: \mathbb{R}^d \to \mathbb{R}$ is said to be $\alpha$-smooth if it is differentiable and its gradient is Lipschitz continuous with a Lipschitz constant $\alpha$, namely 
\begin{align*}
    \forall x, y \in \mathbb{R}^d, \quad \|\nabla f(x) - \nabla f(y)\| \leq \alpha \|x - y\|.
\end{align*}
\end{definition}

An $\alpha$-smooth function possesses the following useful upper bounding property: for  $x, y \in \R^n$:

\begin{equation}
    f(y) \leq f(x) + \langle \nabla f(x), y - x \rangle + \frac{\alpha}{2} \|y - x\|^2.
\end{equation}


	\subsection{Proof of Lemma \ref{lemma:smoothing}} \label{sec:randSmooth}

    Let $h$ and $f$ be the convex functions over the ball $B(4R)$ satisfying the statement of the lemma, i.e., $\|h - f\|_{\infty} \le \e$ and $f$ is $\alpha$-smooth. Recall that the smoothed function $h_r$ is defined as $h_r(x) = \E h(x + rU)$ for $x \in B(R)$, where $U$ is a random vector uniformly distributed on the unit ball $B(1)$ and $r \le R$.     
    
    The first observation is that since $h$ is close to $f$ and the latter is smooth, their (sub)gradients are close to each other; the same also holds between $h_r$ and $f$.

    \begin{lemma} \label{lemma:gradFG}
        We have the following:
        \begin{enumerate}
        	\item $\|\partial h(x) - \nabla f(x)\| \le 2 \sqrt{\alpha \e}$ for every $x \in B(2R)$ and every subgradient $\partial h(x)$.
        	\item $\|\nabla h_r(x) - \nabla f(x)\| \le 2 \sqrt{\alpha \e} + \alpha r$ for every $x \in B(R)$.
        \end{enumerate} 
    \end{lemma}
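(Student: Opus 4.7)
The plan for part 1 is to sandwich the smooth function $f$ between the convex lower bound coming from a subgradient of $h$ and the quadratic upper bound coming from $\alpha$-smoothness of $f$, with $\|h-f\|_{\infty}\le \e$ serving as the ``interface''. Concretely, fix $x \in B(2R)$, a subgradient $g \in \partial h(x)$, and set $v := g - \nabla f(x)$. Convexity of $h$ combined with $|h-f|\le \e$ applied at both $x$ and $y$ gives $f(y) \ge f(x) + \langle g, y - x \rangle - 2\e$ for every $y \in B(4R)$, while $\alpha$-smoothness of $f$ gives the standard bound $f(y) \le f(x) + \langle \nabla f(x), y - x \rangle + \tfrac{\alpha}{2}\|y-x\|^2$. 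Subtracting these yields the key inequality $\langle v, y - x\rangle \le 2\e + \tfrac{\alpha}{2}\|y-x\|^2$, and plugging in the optimal test point $y - x = (\|v\|/\alpha)(v/\|v\|)$ delivers $\|v\|^2/(2\alpha) \le 2\e$, i.e.\ $\|v\| \le 2\sqrt{\alpha\e}$.

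The main technical obstacle is the domain constraint: the optimal test point $y = x + v/\alpha$ must lie in $B(4R)$ for the bound on $f(y)$ to be valid, but a priori we do not know that $\|v\|/\alpha \le 2R$. I would address this with a short bootstrapping step. First, apply the key inequality with the ``safe'' test point $y - x = 2R \cdot v/\|v\|$, which is always in $B(4R)$ because $x \in B(2R)$; this gives the crude a priori estimate $\|v\| \le \e/R + \alpha R \le 2\alpha R$, where the last step uses the hypothesis $\e \le \alpha R^2$. With this in hand, the optimal test point satisfies $\|v\|/\alpha \le 2R$ and is therefore admissible, which unlocks the sharp bound $\|v\| \le 2\sqrt{\alpha\e}$.

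For part 2, I would invoke the identity $\nabla h_r(x) = \E_U[\nabla h(x+rU)]$, valid for any measurable selection of subgradients of the convex (and thus a.e.\ differentiable, locally Lipschitz) function $h$; this is obtained by differentiating $h_r(x) = \E_U[h(x+rU)]$ under the integral via dominated convergence. For $x \in B(R)$ and $r \le R$ the point $x+rU$ lies in $B(2R)$ almost surely, so part 1 applies pointwise and gives $\|\nabla h(x+rU) - \nabla f(x+rU)\| \le 2\sqrt{\alpha\e}$ a.s., while $\alpha$-smoothness of $f$ gives $\|\nabla f(x+rU) - \nabla f(x)\| \le \alpha r$. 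A triangle inequality inside the expectation then concludes $\|\nabla h_r(x) - \nabla f(x)\| \le 2\sqrt{\alpha\e} + \alpha r$, as desired.
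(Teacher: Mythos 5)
Your argument is correct and takes essentially the same approach as the paper: sandwich $f$ between the subgradient lower bound on $h$ and the $\alpha$-smoothness upper bound, subtract to get $\langle v, y-x\rangle \le 2\e + \tfrac{\alpha}{2}\|y-x\|^2$ for $v = \partial h(x) - \nabla f(x)$, then pick a test point along $v$; part 2 is handled identically via $\nabla h_r(x) = \E\,\partial h(x+rU)$, Jensen, and a triangle inequality. The only place you diverge is the domain check for the test point: you use the unconstrained optimizer $y = x + v/\alpha$ and bootstrap a crude bound $\|v\| \le 2\alpha R$ from the safe test point $y = x + 2R\, v/\|v\|$ to certify $y \in B(4R)$, whereas the paper fixes the displacement length at $2\sqrt{\e/\alpha}$ (which equally minimizes the resulting one-variable bound $\|v\| \le 2\e/t + \alpha t/2$) and is automatically at most $2R$ under $\e \le \alpha R^2$, so no bootstrap is needed. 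Both are sound; the paper's choice of scale is simply a one-step shortcut. (You also have the orientation of $y-x$ correct; the paper's displayed definition of $y$ has a sign slip, writing $x-y$ where $y-x$ is meant.)
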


    \begin{proof}
    	To prove the first item, fix $x \in B(2R)$ and let $y$ be such that $$x-y = 2 \sqrt{\frac{\e}{\alpha}} \cdot \frac{\partial h(x)-\nabla f(x)}{\|\partial h(x) - \nabla f(x)\|}.$$ Since $x \in B(2R)$, notice that $y$ has norm at most $2R + 2 \sqrt{\e/\alpha}$, which by assumption of $\e$ is at most $4R$; thus, $y$ is in the domain of $h$ and $f$. 
     
        Then using $\alpha$-smoothness of $f$, $\|h -  f\|_\infty \le \e$, and convexity of $h$, we have
     \begin{align*}
            &f(x) + \ip{\nabla f(x)}{y-x} + \frac{\alpha}{2} \|x-y\|^2 \,\ge\, f(y) \,\ge\, h(y) - \e \,\\
            &\qquad\qquad\qquad\qquad\qquad~~\ge\, h(x) + \ip{\partial h(x)}{y-x} - \e,
     \end{align*}
     and so 
     \begin{align*}
            \ip{\partial h(x) - \nabla f(x)}{y-x} &\le  f(x) - h(x) + \e + \frac{\alpha}{2} \|x-y\|^2 \\
            &\le  2\e + \frac{\alpha}{2} \|x-y\|^2\,.
      \end{align*}
      Plugging the definition of $y$ on this expression gives 
      \begin{align*}
        2 \sqrt{\frac{\e}{\alpha}} \cdot \|\partial h(x) - \nabla f(x)\| \le 4\e,
      \end{align*}
      and so $\|\partial h(x) - \nabla f(x)\| \le 2 \sqrt{\alpha \e}$, which gives the first item of the lemma. 
      
      For the second item, again let $U$ be uniformly distributed in $B(1)$. This random variable is sufficiently regular that gradients and expectations commute, namely $\nabla h_r(x) = \nabla (\E\, h(x + rU)) = \E\, \partial h(x + rU)$, were $\partial h$ denotes any subgradient of $h$~\cite{bertsekas}.
      Then applying Jensen's inequality, for any $x \in B(R)$ we get
      \begin{align*}
      \|\nabla h_r(x) - \nabla f(x)\| &= \|\E\, \partial h(x + r U) - \nabla f(x)\| \\
      &\le \E\, \|\partial h(x + r U) - \nabla f(x)\|.
      \end{align*}
      Also, for any unit-norm vector $u$ we have 
      \begin{align*}
      \|\partial h(x + r u) - \nabla f(x) \| &\le \|\partial h(x + r u) - \nabla f(x + r u) \| \\
      &~~~~~~+ \|\nabla f(x + r u) - \nabla f(x) \| \\
      &\le 2 \sqrt{\alpha \e} + \alpha r,
      \end{align*}
   where the last inequality follows from Item 1 of the lemma (since $r \le R$, $x + ru$ has norm at most $R + r \le 2R$ and so the item can indeed be applied) and $\alpha$-smoothness of $f$ (which is equivalent to $\|\nabla f(z) - \nabla f(z')\| \le \alpha \|z-z'\|$~\cite{nesterov2018lectures}). This concludes the proof.  
    \end{proof}

    The second element that we will need is a bound on the total variation between the the uniform distributions on the two same-radius balls with different centers.

    
    \begin{lemma} \label{lemma:totalVarBall}
    Let $X \in \R^d$ be the uniformly distributed on $B(x,r)$ and $Y \in \R^d$ be uniformly distributed on $B(y,r)$. Then there is a random variable $(X',Y') \in \R^{2d}$ where $X'$ has the same distribution as $X$ and $Y'$ the same distribution as $Y$, and where $\Pr(X' \neq Y') \le \frac{\|x-y\| \sqrt{d}}{r}$.   
    \end{lemma}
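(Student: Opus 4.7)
The plan is to construct the so-called \emph{maximal coupling} between $\mu := \mathrm{Unif}(B(x,r))$ and $\nu := \mathrm{Unif}(B(y,r))$. It is classical that for any two probability measures there is a coupling $(X',Y')$ of $\mu$ and $\nu$ with $\Pr(X'\neq Y') = \mathrm{TV}(\mu,\nu)$, obtained by sampling from the common-mass measure $\min(d\mu,d\nu)$ on the diagonal with probability $1-\mathrm{TV}$ and otherwise sampling independently from the (normalized) residuals $(d\mu-d\nu)_+$ and $(d\nu-d\mu)_+$. Thus the whole lemma reduces to proving the total-variation bound
\[
\mathrm{TV}(\mu,\nu) \;\le\; \frac{\|x-y\|\sqrt{d}}{r}.
\]

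Since both densities equal $1/(V_d r^d)$ on their supports (with $V_d$ the volume of the unit ball in $\R^d$) and vanish elsewhere, a direct density computation gives $\mathrm{TV}(\mu,\nu) = \mathrm{vol}(B(x,r)\setminus B(y,r))/(V_d r^d)$, so I only need to estimate this volume. I would assume WLOG $x=0$ and $y = \ell e_1$ with $\ell := \|x-y\|$, and slice orthogonally to $e_1$: writing $z = t e_1 + w$ with $w\perp e_1$, the condition $z \in B(0,r)\setminus B(\ell e_1,r)$ forces $t < \ell/2$ (because for $t \ge \ell/2$ we have $(t-\ell)^2 \le t^2$, so the $t$-slice of $B(0,r)$ sits inside the $t$-slice of $B(\ell e_1,r)$). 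For $t<\ell/2$ the slice is contained in the $(d-1)$-ball of radius $\sqrt{r^2-t^2}$, which gives
\[
\mathrm{vol}(B(0,r)\setminus B(\ell e_1,r)) \;=\; 2\int_0^{\ell/2} V_{d-1}(r^2-t^2)^{(d-1)/2}\,dt \;\le\; \ell\, V_{d-1}\, r^{d-1}.
\]
(We may assume $\ell \le 2r$; otherwise the RHS of the lemma exceeds $1$ and the bound is vacuous.)

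The last ingredient is the standard Gamma-ratio bound
\[
\frac{V_{d-1}}{V_d} \;=\; \frac{\Gamma(d/2+1)}{\sqrt{\pi}\,\Gamma((d+1)/2)} \;\le\; \sqrt{d},
\]
which follows from Gautschi's inequality for ratios of Gamma values. Putting the pieces together,
\[
\mathrm{TV}(\mu,\nu) \;\le\; \frac{\ell\, V_{d-1}\, r^{d-1}}{V_d\, r^d} \;=\; \frac{\ell}{r}\cdot\frac{V_{d-1}}{V_d} \;\le\; \frac{\|x-y\|\sqrt{d}}{r},
\]
as desired. The main (mild) technical step is the slicing identity together with the case distinction for $t \ge \ell/2$; the coupling itself is then free, via the classical TV-coupling correspondence.
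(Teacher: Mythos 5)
Your overall route is the same as the paper's: invoke the maximal-coupling characterization of total variation, then reduce everything to bounding $\mathrm{TV}(\mu,\nu)$ by $\|x-y\|\sqrt{d}/r$. The paper simply cites this TV bound (via an inequality from another reference together with a Wallis-type estimate for $V_{d-1}/V_d$), while you try to derive it from scratch; that is a reasonable thing to do, and your reduction to $\mathrm{vol}(B(x,r)\setminus B(y,r))/(V_d r^d)$ and your Gamma-ratio bound $V_{d-1}/V_d \le \sqrt{d}$ are both fine.

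However, there is a genuine gap in the volume estimate. The identity
\[
\mathrm{vol}\bigl(B(0,r)\setminus B(\ell e_1,r)\bigr) \;=\; 2\int_0^{\ell/2} V_{d-1}\,(r^2-t^2)^{(d-1)/2}\,dt
\]
is \emph{true}, but it does not follow from the argument you give. You observe, correctly, that each $t$-slice of $B(0,r)\setminus B(\ell e_1,r)$ is contained in the $(d-1)$-ball of radius $\sqrt{r^2-t^2}$, and that slices with $t\ge \ell/2$ are empty. Taken literally, that reasoning only yields
\[
\mathrm{vol}\bigl(B(0,r)\setminus B(\ell e_1,r)\bigr) \;\le\; \int_{-r}^{\ell/2} V_{d-1}\,(r^2-t^2)^{(d-1)/2}\,dt,
\]
whose right-hand side is of order $V_d r^d$ (not $\ell$) when $\ell$ is small — far too weak to give $\mathrm{TV} \lesssim \ell/r$. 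To get the integration range down to $[-\ell/2,\ell/2]$ you actually need the cancellation between the two balls. Concretely, writing $g(t) := V_{d-1}(r^2-t^2)_+^{(d-1)/2}$ for the slice volume of a single ball, the slice of the difference is $g(t)-g(t-\ell)$ for $t<\ell/2$ (the two $t$-slices are concentric disks, so one is nested in the other), and then
\[
\int_{-\infty}^{\ell/2}\bigl(g(t)-g(t-\ell)\bigr)\,dt \;=\; \int_{-\infty}^{\ell/2}g(t)\,dt - \int_{-\infty}^{-\ell/2}g(u)\,du \;=\; \int_{-\ell/2}^{\ell/2}g(t)\,dt \;=\; 2\int_0^{\ell/2}g(t)\,dt,
\]
via the change of variables $u=t-\ell$ and evenness of $g$. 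Equivalently, one can compute $\mathrm{vol}\bigl(B(0,r)\cap B(\ell e_1,r)\bigr)=2\int_{\ell/2}^{r}g(t)\,dt$ and subtract from $V_d r^d = 2\int_0^r g(t)\,dt$. Either way, the essential step is this shift/cancellation argument, not the slice containment you invoke; once you insert it, the rest of your proof goes through as written.
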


    \begin{proof}[Proof sketch]
    This folklore result can be obtained as follows. Let $\mu_z$ be the uniform distribution over $B(z,r)$. Since $\mu_x$ and $\mu_y$ are the distribution of $X$ and $Y$, by the Maximal Coupling Lemma (Theorem 5.2 of \cite{lindvall}) there is a random variable $(X',Y') \in \R^{2d}$ where $X' \sim \mu_x$ and $Y' \sim \mu_y$ and $\Pr(X' \neq Y') = \frac{1}{2} \int |\textrm{d}\mu_x(z) - \textrm{d}\mu_y(z)| \textrm{d}z$. Moreover, it is known that the right hand side is at most $\frac{\|x-y\| \sqrt{d}}{r}$, see for example inequality (39) of \cite{yousefianArxiv} (plus the estimate from \cite{wallisIneq}).
    \end{proof}

    We are now ready to prove Lemma \ref{lemma:smoothing}.

    \begin{proof}[Proof of Lemma \ref{lemma:smoothing}]
      Item 1: We prove that $\|\nabla h_r(x) - \nabla h_r(y)\| \le (\frac{4 \sqrt{\alpha \e d}}{r} + 3\alpha \sqrt{d}) \|x-y\|$ for all $x,y$. In fact, it suffices to prove this for $x,y$ where $\|x-y\| \le r$, since the inequality can then be chained to obtain the result for any pair of points. 
      
     Then fix $x,y$ with $\|x-y\| \le r$. Using the notation from Lemma \ref{lemma:totalVarBall}, $\nabla h_r(x) = \E\, \partial h(X')$ and $\nabla h_r(y) = \E\, \partial h(Y')$ and $\Pr(X' \neq Y') \le \frac{\|x-y\| \sqrt{d}}{r}$. Applying Jensen's inequality,
        \begin{align*}
            &\|\nabla h_r(x) - \nabla h_r(y)\| \le \E_{X',Y'} \|\partial h(X') - \partial h(Y')\| \\
            &\le \frac{\|x-y\| \sqrt{d}}{r} \cdot \max_{x', y' \in B(x,r) \cup B(y,r)} \|\partial h(x') - \partial h(y')\|. 
        \end{align*} 
        We upper bound the last term by applying triangle inequality and then Lemma \ref{lemma:gradFG}:
        \begin{align*}
        \|\partial h(x') - \nabla h(y')\| &\le \|\partial h(x') - \nabla f(x')\| \\
        &~~~~~+ \|\nabla f(x') - \nabla f(y')\| \\
        &~~~~~+ \|\nabla f(y') - \partial h(y')\| \\
        &\le 4 \sqrt{\alpha \e} + \alpha \,\|x'-y'\| \\
        &\le 4 \sqrt{\alpha \e} + 3\alpha r,
        \end{align*}
        where the second inequality uses that $f$ is $\alpha$-smooth, and the last inequality uses the assumption $\|x-y\| \le r$. Plugging this into the previous inequality gives $$ \|\nabla h_r(x) - \nabla h_r(y)\| \le \bigg(\frac{4 \sqrt{\alpha \e d}}{r} + 3\alpha \sqrt{d}\bigg) \cdot \|x-y\|,$$ as desired. 
        
        Second item: We now show that $\|h_r - f\|_{\infty} \le \e + \frac{\alpha r^2}{2}$. Fix $x \in \R^d$, and again let $U$ be uniformly distributed in the unit ball. Using the assumption $\|h - f\|_{\infty} \le \e$ and convexity of $f$, we have
    \begin{align*}
    h(x + r U) \ge f(x + r U) - \e \ge f(x) + \ip{\nabla f(x)}{r U} - \e.
    \end{align*}  
    Since $U$ has mean zero, taking expectations gives $h_r(x) \ge f(x) - \e$. Similarly, since $f$ is $\alpha$-smooth
    \begin{align*}
    h(x + r U) &\le f(x + r U) + \e \\
    					 &\le f(x) + \ip{\nabla f(x)}{r U} + \frac{\alpha}{2} \|r U\|^2 + \e,
    \end{align*}  
    and taking expectations gives $h_r(x) \le f(x) + \frac{\alpha r^2}{2} + \e$. Together, these yield $|h_r(x) - f(x)| \le \e + \frac{\alpha r^2}{2}$, thus proving the result. This concludes the proof of the theorem.
    \end{proof}
    
	
	\subsection{Proof of Theorem \ref{thm:smoothExt}} \label{app:smoothExt}	

	Throughout this section, fix an $\alpha$-smooth $M$-Lipschitz function $f : B(R) \rightarrow \R$. Recall that we have a sequence of queried points $x_1,\ldots,x_T \in B(R)$ and access to an $\eta$-approximate first-order oracle for $f$. Our goal is to produce, in an online fashion, a sequence of first-order pairs $(\hat{f}_1,\hat{g}_1),\ldots,(\hat{f}_T,\hat{g}_T)$ for the queried points and a function $S$ that is smooth, Lipschitz, and compatible with these first-order pairs (i.e., $S(x_t) = \hat{f}_t$ and $\nabla S(x_t) = \hat{g}_t$). 

     As mentioned, in each iteration $t$ we will keep a piecewise linear function $F_t$ and their smoothened version $(F_t)_r$ (by using the randomized smoothing from the previous section for a specific value of $r$). Our transfer procedure then returns the first-order information $\hat{f}_t := (F_t)_r(x_t)$ and $\hat{g}_t := \nabla (F_t)_r(x_t)$ of the latter. The final smooth function $S : B(R) \rightarrow \R$ compatible with the first-order information output by the procedure will be given, as in Lemma \ref{lemma:finalFunc}, by using the maximum between the final $F_T$ and a shifted version of the original function $f$. Also recall that in order to ensure the compatibility of $S$ with the  first-order information $(\hat{f}_t, \hat{g}_t)$ output throughout the process, we need to ``protect'' the points $x_t$ and ensure that the function values and gradients at these points did not change across iterations, i.e. $(F_T)_r(x_t) = (F_t)_r(x_t)$ and $\nabla (F_T)_r(x_t) = \nabla (F_t)_r(x_t)$. Since $(F_{t'})_r(x_t)$ depends on the values of $F_{t'}$ at the ball $B(x_t, r)$ around $x_t$, we need to ``protect'' $F_{t'}$ on these balls, namely to have $F_T(x) = F_t(x)$ for all $x \in B(x_t, r)$. 
     
    For convenience, we recall the exact construction of the functions $F_t$, the first-order information returned, and the final extension $S$. In hindsight, set $r := \sqrt{\eta/\alpha}$, and for every $t$ define the shift $s_t := 4 \eta t + \alpha r^2 t$. 
 
		\begin{mdframed}
    \textbf{Procedure \ref{proc:smoothExt}.}

     Set $F_0(x) = -\infty$. 
     
     For each $t=1,\ldots,T$:

    \vspace{-6pt}
    \begin{enumerate}[leftmargin=18pt]    
        \item \vspace{-4pt} Query the $\eta$-approximate oracle for $f$ at $x_t$, receiving the first-order pair $(\tilde{f}_t, \tilde{g}_t)$. 
        
				\item \vspace{-6pt} Define the function $F_t$ by setting $F_t(x) = \max\{F_{t-1}(x)\,,\, \tilde{f}_t + \ip{\tilde{g}_t}{x - x_t} - (s_t + 2 \eta) \}$ for all $x$.       
        
				\item \vspace{-6pt} Output the first-order information of the randomly smoothed function $(F_t)_r$: $\hat{f}_t := (F_t)_r(x_t)$ and $\hat{g}_t := \nabla (F_t)_r(x_t)$.
    \end{enumerate}
 
    \vspace{-6pt}    
    Define the function $S : B(R) \rightarrow \R$ by $S = (\max\{F_T, f- s_{T+1}\})_r$, where $\max$ denotes pointwise maximum.
	\end{mdframed}
	
	We now prove the main properties of the functions $F_t$, formulated in the following lemma. The first two are similar to \eqref{eq:underApprox} and \eqref{eq:queriedApprox} used in our non-smooth transfer result and guarantee, loosely speaking, that $F_t$ is close to the original function $f$. The third property is precisely the ``ball protection'' idea discussed above.

	\begin{lemma} \label{lemma:smoothFt}
	For all $t$, the function $F_t$ satisfies the following:
	
	\vspace{-6pt} 
\begin{enumerate}
    \item $F_t(x) \le f(x)$ for every $x \in B(4R)$

		\item For every $t' \le t$, we have $F_t(x) \ge f(x) - s_{t+1}$ for all $x \in B(x_{t'}, \sqrt{2}\, r)$ 

    \item For every $t' \le t$, we have $F_t(x) = F_{t'}(x)$ for every $x \in B(x_{t'}, \sqrt{2}\, r)$. In particular $(F_t)_r(x_{t'}) = (F_{t'})_r(x_{t'})$ and $\nabla (F_t)_r(x_{t'}) = \nabla (F_{t'})_r(x_{t'})$. 
\end{enumerate}
	\end{lemma}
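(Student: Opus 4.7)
All three items are proved by simultaneous induction on $t$, with vacuous base case $t=0$ (where $F_0 \equiv -\infty$). Write the new affine piece introduced at step $t$ as $A_t(x) := \tilde{f}_t + \langle \tilde{g}_t, x - x_t\rangle - (s_t + 2\eta)$, so that $F_t = \max\{F_{t-1}, A_t\}$, and let $g_t := \nabla f(x_t)$. The oracle guarantees $|\tilde{f}_t - f(x_t)| \le \eta$ and $\|\tilde{g}_t - g_t\| \le \eta/(2R)$, while the choice $r = \sqrt{\eta/\alpha}$ produces $\alpha r^2 = \eta$ and $s_{j+1} - s_j = 4\eta + \alpha r^2 = 5\eta$ for every $j$. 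Finally, the hypothesis $\eta \le \alpha R^2/(5T)$ gives $r \le R/\sqrt{5T} \le R$.

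Items 1 and 2 are direct one-shot calculations. For item 1, since $F_{t-1} \le f$ by induction, it suffices to bound $A_t(x) \le f(x)$ on $B(4R)$: convexity gives $f(x_t) + \langle g_t, x - x_t\rangle \le f(x)$, and combining with the oracle errors and $\|x-x_t\| \le 5R$ yields $A_t(x) \le f(x) + \frac{3\eta}{2} - s_t$, which is $\le f(x)$ since $s_t \ge s_1 = 5\eta$. For item 2 with $t' < t$ one inherits $F_t \ge F_{t-1} \ge f - s_t \ge f - s_{t+1}$ by induction. For the fresh case $t' = t$, on $B(x_t, \sqrt{2}r)$ the $\alpha$-smoothness of $f$ gives $\langle g_t, x-x_t\rangle \ge f(x) - f(x_t) - \alpha r^2$; combining with the oracle errors (the gradient error contributes at most $\frac{\sqrt{2}\eta r}{2R} \le \eta$) yields $A_t(x) \ge f(x) - 3\eta - \alpha r^2 - \frac{\sqrt{2}\eta r}{2R} - s_t \ge f(x) - s_{t+1}$, using $s_{t+1} - s_t = 5\eta$.

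Item 3 is the crux. For $t' = t$ there is nothing to prove. For $t' < t$, the inductive hypothesis gives $F_{t-1} = F_{t'}$ on $B(x_{t'}, \sqrt{2}r)$, so it suffices to show that $A_t(x) \le F_{t-1}(x)$ on this ball --- then the $\max$ keeps selecting $F_{t-1}$ and we obtain $F_t = F_{t-1} = F_{t'}$. For $x \in B(x_{t'}, \sqrt{2}r)$ we have $\|x - x_t\| \le 2R + \sqrt{2}r$, and the same upper-bounding style as in item 1 yields $A_t(x) \le f(x) + \frac{\sqrt{2}\eta r}{2R} - s_t$. For the companion lower bound, one uses the \emph{explicit} form $F_{t'}(x) \ge f(x) - s_{t'} - \eta - \alpha r^2 - \frac{\sqrt{2}\eta r}{2R}$ that was produced inside the proof of item 2 at time $t'$ (rather than the weaker stated bound $f - s_{t'+1}$). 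The desired $A_t(x) \le F_{t'}(x)$ then reduces to $s_t - s_{t'} \ge 2\eta + \frac{\sqrt{2}\eta r}{R}$, which holds since $s_t - s_{t'} \ge 5\eta(t-t') \ge 5\eta$ and $r \le R$. The ``in particular'' consequence is then immediate: $(F_t)_r(x_{t'})$ and $\nabla (F_t)_r(x_{t'})$ are determined by the values (and subgradients) of $F_t$ on $B(x_{t'}, r) \subset B(x_{t'}, \sqrt{2}r)$, where $F_t \equiv F_{t'}$.

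The main obstacle is item 3: the oracle-gradient error $\frac{\eta}{2R}\|x - x_t\|$ appearing in the upper bound on $A_t(x)$ scales with $\|x - x_t\|$, which is of order $R$, producing an $O(\eta)$ surplus that cannot be absorbed using only the weak statement $F_{t-1} \ge f - s_t$ offered by item 2. The proof must exploit the sharper lower bound that naturally falls out of the item 2 derivation, together with the observation that the per-step shift $s_t - s_{t-1} = 4\eta + \alpha r^2 = 5\eta$ has been calibrated to dominate all error terms arising in one step. This is precisely why Procedure \ref{proc:smoothExt} uses the shift $s_t + 2\eta = (4t+2)\eta + \alpha r^2 t$ rather than anything smaller.
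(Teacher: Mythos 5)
Your proposal follows essentially the same inductive strategy as the paper's proof: item 1 from convexity of $f$ and the oracle errors, item 2 from $\alpha$-smoothness restricted to $B(x_t, \sqrt{2}\,r)$, and item 3 by showing the new affine piece $A_t$ cannot rise above $F_{t-1}$ on the protected balls, with all per-step slack absorbed by the calibrated increment $s_{t+1}-s_t = 4\eta+\alpha r^2 = 5\eta$. The chain in item 3 is organized slightly differently (an explicit upper bound on $A_t$ paired with a lower bound on $F_{t'}$, versus the paper's three-step string of inequalities through $f(x)-s_{t'+1}$), but this is the same estimate.

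One arithmetic slip: your own item-2 computation gives $A_{t'}(x) \ge f(x) - 3\eta - \alpha r^2 - \frac{\sqrt{2}\,\eta r}{2R} - s_{t'}$ on $B(x_{t'},\sqrt{2}\,r)$ (the shift in the procedure is $s_{t'}+2\eta$, not $s_{t'}$), but in item 3 you record this as $F_{t'}(x) \ge f(x) - s_{t'} - \eta - \alpha r^2 - \frac{\sqrt{2}\,\eta r}{2R}$, dropping that extra $2\eta$. Your stated sufficient condition $s_t-s_{t'} \ge 2\eta + \frac{\sqrt{2}\,\eta r}{R}$ should therefore read $s_t-s_{t'} \ge 4\eta + \frac{\sqrt{2}\,\eta r}{R}$. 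The conclusion survives the correction: since $s_t - s_{t'} \ge 5\eta$ for $t>t'$ and $r \le R/\sqrt{5T} < R/\sqrt{2}$, we have $4\eta + \frac{\sqrt{2}\,\eta r}{R} < 5\eta$, so the argument is sound once the constant is fixed.
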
	
	
	\begin{proof}
		We prove these properties by induction on $t$. 
		
		First item: Since the property holds by induction for $F_{t-1}$ and $F_t(x) = \max\{F_{t-1}(x)\,,\, \tilde{f}_t + \ip{\tilde{g}_t}{x - x_t} - (s_t + 2 \eta)\}$, it suffices to show that 
		\begin{align}
		\tilde{f}_t + \ip{\tilde{g}_t}{x - x_t} - (s_t + 2 \eta) \le f(x) \label{eq:item1Ft}
		\end{align}
		for all $x \in B(4R)$. For that, since $(\tilde{f}_t, \tilde{g}_t)$ comes from an $\eta$-approximate first-order oracle, by definition $|\tilde{f}_t - f(x_t)| \le \eta$ and $\|\tilde{g}_t - \nabla f(x_t)\| \le \frac{\eta}{2R}$; in particular, $|\ip{\tilde{g}_t - \nabla f(x_t)}{x-x_t}| \le \|\tilde{g}_t - \nabla f(x_t)\| \|x-x_t\| \le \frac{5\eta}{2}$ for every $x \in B(4R)$ (since also $x_t \in B(R)$, by assumption). Then using convexity of $f$ we get
		\begin{align}
			f(x) &\ge f(x_t) + \ip{\nabla f(x_t)}{x - x_t} \notag\\
			&\ge \tilde{f}_t + \ip{\tilde{g}_t}{x - x_t} - \eta - \frac{5\eta}{2} ,  \label{eq:item1Conv}
		\end{align}
		which implies \eqref{eq:item1Ft} as desired, since $s_t + 2\eta \ge \eta + \frac{5\eta}{2}$. 
		
		Second item: Again since this property holds by induction for $F_{t-1}$, it suffices to show 
		\begin{align}
		\tilde{f}_t + \ip{\tilde{g}_t}{x - x_t} - (s_t + 2\eta) \ge f(x) - s_{t+1} \label{eq:item2Ft}
		\end{align}
		for all $x \in B(x_t, \sqrt{2}\,r)$. Since $f$ is $\alpha$-smooth, for every such $x$ we have 
		\begin{align}
			f(x) &\le f(x_t) + \ip{\nabla f(x_t)}{x - x_t} + \frac{\alpha}{2} \|x - x_t\|^2 \notag\\
			&\le \tilde{f}_t + \ip{\tilde{g}_t}{x - x_t} + 2 \eta + \alpha r^2. \label{eq:item2Smooth}
		\end{align}
		Since $s_{t+1} = s_t + 4 \eta + \alpha r^2$, reorganizing the terms gives \eqref{eq:item2Ft} as desired.  
		
		Third item: To show that for every $t' \le t$, we have $F_t(x) = F_{t'}(x)$ for every $x \in B(x_{t'}, \sqrt{2}\, r)$, it suffices to show that for every $t' < t$
		\begin{align}
		\tilde{f}_t + \ip{\tilde{g}_t}{x - x_t} - (s_t + 2\eta) \le F_{t-1}(x) \label{eq:item3Ft}
		\end{align}		
		for all $x \in B(x_{t'}, \sqrt{2}\,r)$. For that, first notice that for all $t' < t$ we have $F_{t-1} \ge F_{t'}$, and the latter can be lower bounded by the affine term added during iteration $t'$. Combining this with \eqref{eq:item2Smooth}, applied to iteration $t'$, we get for all $x \in B(x_{t'}, \sqrt{2}\,r)$
		\begin{align*}
			F_{t-1}(x) &\ge \tilde{f}_{t'} + \ip{\tilde{g}_{t'}}{x - x_{t'}} - (s_{t'} + 2 \eta)\\
			&\ge f(x) - (s_{t'} + 4\eta + \alpha r^2)\\
			&\ge \tilde{f}_t + \ip{\tilde{g}_t}{x - x_t} - (s_{t'} + 6 \eta + \alpha r^2),
		\end{align*}
		where the last inequality uses \eqref{eq:item1Conv}. Since $s_t \ge s_{t'} + 4 \eta + \alpha r^2$, this implies \eqref{eq:item3Ft} as desired. 
		
		To conclude the proof of this item, notice that $(F_t)_r(x_{t'})$ (respectively $(F_{t'})_r(x_{t'})$) only depends on the values of $F_t$ (resp. $F_{t'}$) on the ball $B(x_{t'},r)$. Since we just showed the value of $F_t$ and $F_{t'}$ agree on this ball, we get $(F_t)_r(x_{t'}) = (F_{t'})_r(x_{t'})$. Similarly, the gradient $\nabla (F_t)_r(x_{t'})$ only depends on the values of $F_t$ on an arbitrarily small open neighborhood of the ball $B(x_{t'}, r)$, and the same holds for $\nabla (F_{t'})_r(x_{t'})$. Since the bigger ball $B(x_{t'}, \sqrt{2}\,r)$ contains such a neighborhood, we again obtain $\nabla (F_t)_r(x_{t'}) = \nabla (F_{t'})_r(x_{t'})$. This concludes the proof of the lemma.  
	\end{proof}

	We are now ready to prove Theorem \ref{thm:smoothExt}. 
	
	\begin{proof}[Proof of Theorem \ref{thm:smoothExt}]
	We need to prove that the function $S : B(R) \rightarrow \R$ defined in Procedure \ref{proc:smoothExt} satisfies: 
	
	\vspace{-6pt}
	\begin{enumerate}
		\item $\|S - f\|_{\infty} \le s_{T+1} + \frac{\alpha r^2}{2}$.
		\item $S$ is $\Big(\frac{4 \sqrt{\alpha \cdot d \cdot s_{T+1}}}{r} + 3\alpha \sqrt{d}\Big)$-smooth
		\item $S$ is $(M + \frac{\eta}{2 R})$-Lipschitz 
		\item $S$ is an extension for the first-order pairs $(\hat{f}_t, \hat{g}_t)$ output by the procedure
	\end{enumerate}
	
		First item: Define the function $\bar{S} := \max\{F_t(x), f(x) - s_{T+1}\}$, so $S = \bar{S}_r$. Using Item 1 of Lemma \ref{lemma:smoothFt}, we see that $\bar{S}(x) \le f(x)$ for all $x \in B(4R)$, and by definition we have $\bar{S}(x) \ge f(x) - s_{T+1}$, thus $|\bar{S}(x) - f(x)| \le s_{T+1}$ for all $x \in B(4R)$. Then using Item 2 of Lemma \ref{lemma:smoothing} we get $|S(x) - f(x)| \le s_{T+1} + \frac{\alpha r^2}{2}$ for all $x \in B(R)$ (we can indeed use this lemma since the definition $r = \sqrt{\eta/\alpha}$ and the assumption $\eta \le \frac{\alpha R^2}{5(T+1)}$ imply that $s_{T+1} \le \alpha R^2$ and $r \le R$). 
		
	Second item: This follows Item 1 of Lemma \ref{lemma:smoothing} instead.
	
	Third item: The subgradients of $F_T$ are (a convex combination of a subset of the) vectors $\tilde{g}_t$, and so $F_T$ is $(\max_t \|\tilde{g}_t\|)$-Lipschitz. Since the vectors came from an $\eta$-approximate oracle for $f$, we have $\|\tilde{g}_t - \nabla f(x_t)\| \le \frac{\eta}{2R}$, and since $f$ is $M$-Lipschitz we get $\|\tilde{g}_t\| \le M + \frac{\eta}{2R}$; it follows that $F_T$ is $(M + \frac{\eta}{2R})$-Lipschitz. Next, the subgradients of $\bar{S}$ come either from subgradients of $F_T$ or gradients of $f$ (or a convex combination thereof), and so $\bar{S}$ is $\max\{M + \frac{\eta}{2R}, M\} = M + \frac{\eta}{2R}$ Lipschitz. Finally, for every $x \in B(R)$ we have ($U$ being uniformly distributed in the unit ball again) 
 $$\|\nabla S(x)\| = \|\E \partial \bar{S}(x + r U)\| \le \E \|\partial \bar{S}(x + r U)\| \le M + \frac{\eta}{2R},$$ 
 where $\partial \bar{S}(x + r U)$ denotes any subgradient at $x + r U$ and the first inequality follows from Jensen's inequality. This proves that $S$ is $(M + \frac{\eta}{2R})$-Lipschitz. 
	
	Fourth item: We need to show that for all $t$, $\hat{f}_t = S(x_t)$ and $\hat{g}_t = \nabla S(x_t)$. By definition, $\hat{f}_t = (F_t)_r(x_t)$ and $\hat{g}_t = \nabla (F_t)_r(x_t)$. Moreover, by Item 3 of Lemma \ref{lemma:smoothFt}, using $F_T$ instead of $F_t$ gives the same quantities, namely $\hat{f}_t = (F_T)_r(x_t)$ and $\hat{g}_t = \nabla (F_T)_r(x_t)$. We claim that for every $t$, $F_T$ and $\bar{S}$ are equal inside the ball $B(x_t, \sqrt{2}\, r)$, which then implies that $\hat{f}_t = (F_T)_r(x_t) = \bar{S}_r(x_t) = S(x_t)$ and $\hat{g}_t = \nabla (F_T)_r(x_t) = \nabla \bar{S}_r(x_t) = \nabla S(x_t)$, as desired. To show the equality in the ball $B(x_t, \sqrt{2}\, r)$, it suffices that the other term in the max defining $\bar{S}$ does not ``cut off'' $F_T$, namely that $f(x) - s_{T+1} \le F_T(x)$ for every $x \in B(x_t, \sqrt{2}\, r)$. But this follows from Item 2 of Lemma \ref{lemma:smoothFt}. 
 
    Substituting the value $r = \sqrt{\eta/\alpha}$ and $s_t = 4 \eta t + \alpha r^2 t$ in the items above concludes the proof of Theorem~\ref{thm:smoothExt}.
	\end{proof}

\section{Separation oracles: proofs of Theorems~\ref{thm:approxSep} and~\ref{thm:approxSep-general}}\label{sec: feasibility}





We now consider the original constrained problem $\min\{f(x) : x \in C \cap X\}$, and show how to run any first-order algorithm $\cA$ using only approximate first-order information about $f$ and approximate separation information from $C$, proving Theorems \ref{thm:approxSep} and \ref{thm:approxSep-general}. The main additional element is to convert the approximate separation information for $C$ into an exact information for a related set $K \approx C$ so it can be used in a black-box fashion by $\cA$, as the previous section did for the first-order information of $f$. For simplicity, we assume throughout that the algorithm $\cA$ only queries points in $B(R)$ (the ball containing the feasible set $C$), since points outside it can be separated exactly.

Given a set of points $x_1, ..., x_T\in B(R)$, we say a sequence of separation responses $r_1, ...,r_T \in \{\feas,\infeas\}\times \R^d$ has a {\em convex extension} if there is a convex set $K \not = \emptyset$ such that there exists an exact (i.e., $0$-approximate) separation oracle for $K$ giving responses $r_1,...,r_T$ for the query points $x_1, ..., x_T$. We will also refer to such responses as {\em consistent with $K$}. As in the previous section, responses from an $\eta$-approximate separation oracle may not by themselves admit a convex extension, and need to be modified in order to allow a consistent, convex extension; for example, approximate separating hyperplanes may not be consistent with a convex set, or may  "cut off" points previously reported as feasible. 
When we say a point $y$ is cut off by a separating hyperplane through $x$ with normal vector $g$, we mean that $\ip{g}{y}> \ip{g}{x}$, i.e. that $y$ is not in the induced halfspace. Note that when given an exact separating hyperplane for some $x\notin C$, no point in $C$ is cut off by it, whereas approximate separating hyperplanes have no such guarantee. With this in mind, we now give a theorem serving as a feasibility analogue to Theorem \ref{thm:LipExt}.

\begin{definition}\label{def:delta-deep-feasible}
    For any convex set $C\subseteq\R^d$ and any $\delta >0$, we define  $C_{-\delta}:=\{x \in C: B(x, \delta) \subseteq C\}$, which will be called {\em $\delta$-deep points of $C$.}
\end{definition}

\begin{theorem}[Online Convex Extensibility] \label{thm:cvxExt}
		Consider a convex set $C\subseteq B(R)$ and a sequence of points $x_1,\ldots,x_T \in B(R)$. There is an online procedure that, given access to an $\eta$-approximate separation oracle for $C$, produces separation responses $\hat{r}_1, ..., \hat{r}_T$ that have a convex extension $K$ satisfying $C_{-\eta} \subseteq K \subseteq C$. Moreover, the procedure only probes the approximate oracle at the points $x_1,\ldots,x_T$.
	\end{theorem}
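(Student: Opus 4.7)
The plan is to mirror Procedure~\ref{proc:lipExt}: design an online procedure that, at each step $t$, converts the (possibly approximate) oracle response at $x_t$ into a response $\hat r_t$ consistently explainable by a single convex set $K$ with $C_{-\eta}\subseteq K\subseteq C$. The extension $K$ will be constructed at the end of the run from the sequence of declared halfspaces; the procedure only needs to maintain enough state to guarantee consistency step by step.

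\paragraph{Key margin lemma.} The first ingredient I would establish is a quantitative consequence of $\|\tilde g-g\|\le \eta/(4R)$, $\|g\|=1$, the exact separation $\langle g,y\rangle\le\langle g,\bar x\rangle$ for $y\in C$, and $x,\bar x\in B(R)$: the two uniform bounds $\langle\tilde g,y\rangle\le\langle\tilde g,\bar x\rangle+\eta/2$ for every $y\in C$, and $\langle\tilde g,y\rangle\le\langle\tilde g,\bar x\rangle-\eta/2$ for every $y\in C_{-\eta}$ (the latter because $y+\eta\tilde g\in C$, yielding an extra $\eta$ slack). Consequently, the halfspace through $\bar x$ with normal $\tilde g$ contains $C_{-\eta}$ with margin $\eta/2$, though it may fail to contain all of $C$ by up to $\eta/2$.

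\paragraph{Online procedure and extension.} Maintain the set $I$ of previously-declared infeasible indices, each tagged with a normal $\hat g_{t'}$ and halfspace $H_{t'}=\{y:\langle\hat g_{t'},y\rangle\le\langle\hat g_{t'},x_{t'}\rangle\}$. On query $x_t$: if the approximate oracle reports $\feas$ (so $x_t\in C$), test whether $x_t\in H_{t'}$ for every $t'\in I$; if yes, set $\hat r_t=\feas$; otherwise pick any violating $t^\dagger\in I$ and set $\hat r_t=(\infeas,\hat g_{t^\dagger})$---the resulting halfspace $H_t$ is a ``push-out'' of $H_{t^\dagger}$ (same normal, boundary moved to pass through $x_t$), so $H_t\supseteq H_{t^\dagger}$ and still contains $C_{-\eta}$ and every earlier feasible-declared point. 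If the approximate oracle reports $(\infeas,\tilde g_t)$ (so $x_t\notin C$), the default is $\hat g_t=\tilde g_t$, whose halfspace contains $C_{-\eta}$ by the margin lemma. At the end, set $K:=C\cap\bigcap_{t\in I}H_t$. Convexity and $K\subseteq C$ are immediate, and $C_{-\eta}\subseteq K$ follows from the margin lemma together with the push-out property. Consistency of the $\infeas$ declarations is automatic ($x_t\notin C\supseteq K$, and $K\subseteq H_t$ by construction); consistency of the $\feas$ declarations requires $x_{t''}\in H_{t'}$ for every pair of a feasible-declared $t''$ and infeasible-declared $t'$, and the case $t'<t''$ is ensured by the test in the $\feas$-branch.

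\paragraph{Main obstacle.} The hard case is $t'>t''$: a later $\infeas$ declaration could retroactively cut an earlier $\feas$-declared $x_{t''}$ out of the intersection. This is the separation analog of the shift $s^*$ engineered in Procedure~\ref{proc:lipExt} to keep new affine pieces consistent with previous function evaluations. Because $\tilde g_t$ can put $x_{t''}\in C$ up to $\eta/2$ above the halfspace through $x_t$, using $\tilde g_t$ verbatim may violate the invariant. The cure I anticipate is to replace $\tilde g_t$ by a tilted normal $\hat g_t$ (computed from $\tilde g_t$, the prior $\hat g_{t'}$'s, and the known feasible points $x_{t''}$) such that the hyperplane through $x_t$ with normal $\hat g_t$ still contains $C_{-\eta}$ and also every at-risk $x_{t''}$. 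Existence of such a $\hat g_t$ follows from $x_t\notin\mathrm{conv}\bigl(C_{-\eta}\cup\{x_{t''}:\hat r_{t''}=\feas,\, t''<t\}\bigr)\subseteq C$ by Hahn--Banach; the constructive, oracle-only computation of $\hat g_t$---the analog of the $s^*$ minimization in Procedure~\ref{proc:lipExt}---is where I expect the bulk of the technical work to lie.
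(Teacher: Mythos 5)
Your overall architecture matches the paper's (maintain an online intersection of halfspaces, rotate the new normal when necessary to protect earlier \feas declarations, intersect with $C$ at the end), and your margin lemma and your handling of the oracle-says-\feas-but-$x_t$-outside-the-intersection case are fine. However, the step you defer as ``the bulk of the technical work''—constructing $\hat g_t$ in the \infeas branch—is exactly where the proof lives, and the route you sketch does not close it. Your existence argument via Hahn--Banach requires $\hat g_t$ to separate $x_t$ from $\operatorname{conv}(C_{-\eta}\cup\feasbar_{t-1})$, but the procedure has no access to $C$ or $C_{-\eta}$; it only sees the points $x_\tau$ and the approximate oracle responses. So ``contain $C_{-\eta}$'' cannot be an explicit constraint in the online computation of $\hat g_t$, and Hahn--Banach gives no recipe the algorithm can follow. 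The theorem asserts an online \emph{procedure}, not merely the existence of a good post-hoc extension, so this is a genuine gap rather than a technicality.

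The paper's resolution is short once you see it: take $\hat g_t$ to be the unit vector closest to $\tilde g_t$ subject only to the \emph{computable} constraint $H(\hat g_t,x_t)\supseteq\feasbar_{t-1}$ (a finite set of linear inequalities in $\hat g_t$). Containment of $C_{-\eta}$ is then \emph{derived}, not enforced: since the exact separating normal $g$ already satisfies $H(g,x_t)\supseteq C\supseteq\feasbar_{t-1}$ and $\|g-\tilde g_t\|\le\eta/(4R)$, the minimizing $\hat g_t$ satisfies $\|\hat g_t-\tilde g_t\|\le\eta/(4R)$, hence $\|\hat g_t-g\|\le\eta/(2R)$, and your own margin computation then shows $H(\hat g_t,x_t)\supseteq C_{-\eta}$. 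You had all the pieces (the margin lemma, the observation that $\feasbar_{t-1}\subseteq C$) but did not assemble them into the crucial triangle-inequality argument, and instead reached for an existence statement the algorithm cannot act on.
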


Note that the guarantee $C_{-\eta} \subseteq K \subseteq C$ means that for any point $x_t$ that is $\eta_C$-deep in $C$, i.e., in $C_{-\eta}$, the response $\hat{r}_t$ produced says $\feas$, whereas for any $x_t\notin C$ it says $\infeas$ and gives a hyperplane separating $x_t$ from $K$ (which cannot cut too deep into $C$, i.e., it contains $C_{-\eta}$). At a high-level, such responses allow one to cut off infeasible solutions, but guarantee that there are still ($\eta_C$-deep) solutions with small $f$-value available.

With this additional procedure at hand, we extend Procedure \ref{proc:optimization} from the main text in the following way to solve constrained optimization: in each step of the procedure, we also send the point $x_t$ queried by the algorithm $\cA$ to the $\eta_C$-approximate separation oracle for $C$, receive the response $\tilde{r}_t$, pass it through Theorem \ref{thm:cvxExt} to obtain the new response $\hat{r}_t$, and send the latter back to $\cA$. We call this procedure \textbf{Approximate-to-Exact-Constr}, and formally state it as follows: 
\begin{mdframed}
    \begin{proc} \textbf{Approximate-to-Exact-Constr}$(\cA, T)$ \label{proc:constrained optimization}
    
    For each timestep $t = 1,\ldots, T$:
    \begin{enumerate}
        \item Receive query point $x_t \in B(R)$ from $\cA$ 
        
        \item Send point $x_t$ to the $\eta_f$-approximate first-order oracle and to the $\eta_C$-approximate separation oracle, and receive the approximate first-order information $(\tilde{f}_t, \tilde{g}_t) \in \R \times \R^d$, and separation response $\tilde{r}_t\in \{\feas\}\cup \{\infeas\}\times \R^d$.
        
        \item Use the online procedures from Theorems \ref{thm:LipExt} and \ref{thm:cvxExt} to construct the first-order pair $(\hat{f}_t, \hat{g}_t)$ and separation response $\hat{r}_t$.
        
        \item Send $(\hat{f}_t, \hat{g}_t), \hat{r}_t$ to the algorithm $\cA$.
    \end{enumerate}
    
    Return the point returned by $\cA$.
\end{proc}
\end{mdframed}

The proof that this procedure yields Theorem \ref{thm:approxSep} is analogous to the one for the unconstrained case of Theorem~\ref{thm:main}, so we only sketch it to avoid repetition.

\begin{proof}[Proof sketch of Theorem \ref{thm:approxSep}]
    Let $F$ and $K$ be the Lipschitz and convex extensions to the answers sent to $\cA$ that are guaranteed by Theorems \ref{thm:LipExt} and \ref{thm:cvxExt}, respectively. \textbf{Approximate-to-Exact-Constr} has the same effect as $\cA$ running on the instance $(F,K)$. One can show that this instance belongs to $\cI(M +\frac{\eta_f}{2R}, R, \rho - \eta_C)$. Then if $err(\cdot)$ is the error guarantee of $\cA$ as in the statement of the theorem, this ensures that we return a solution $\bar{x} \in K \cap X$ satisfying $$F(\bar{x}) \le \OPT(F,K) + err(T, M +\frac{\eta_f}{2R}, R, \rho - \eta_C),$$ where $\OPT(F,K) := \argmin\{F(x) : x \in K\cap X\}$. Since $C_{-\eta_C}$ contains a solution $x$ with value $f(x) \le \OPT(f,C) + \frac{2 MR \eta_C}{\rho}$ (e.g., Lemma 4.7 of \cite{basu2021complexity}), we have $\OPT(f, K) \le \OPT(f,C) + \frac{2 MR \eta_C}{\rho}$. Finally, using the guarantee $\|F - f\|_{\infty} \le 2\eta_f T$, we obtain that $f(\bar{x}) \le \OPT(f,C) + err(T, M +\frac{\eta_f}{2R}, R, \rho - \eta_C) + 4 \eta_f T + \frac{2 MR \eta_C}{\rho}$, concluding the proof of the theorem.
\end{proof}

The proof of Theorem~\ref{thm:approxSep-general} follows effectively the same reasoning as for Theorem~\ref{thm:approxSep}; we also sketch it here. The main difference is that one needs to ensure the optimal solution $x^*$ of \ref{eq:main-prob-flexible} is contained in contained in the auxiliary feasible region $K$ the algorithm $\calA$ uses; otherwise the additional restrictions imposed by $Z$ may lead to arbitrarily bad solutions, or even $K\cap Z$ being empty (consider for example the case of $Z$ being a singleton on the boundary of $C$ that is then cut off by an approximate separation response). However, since $K$ is guaranteed to contain $C_{-\eta_C}$ and we assume that $\eta_C \leq \rho$, the fact that $x^*$ is $\rho$-deep in $C$ implies that it is also in $K$. 
\begin{proof}[Proof sketch of Theorem \ref{thm:approxSep-general}]
    Again, let $F$ and $K$ be the Lipschitz and convex extensions as in the previous proof, so that the instance $(F, K)$ belongs to $\cI(M +\frac{\eta_f}{2R}, R, \rho - \eta_C)$ and $\calA$ returns a solution $\bar{x} \in K \cap Z$ satisfying $F(\bar{x}) \le \OPT(F,K) + err(T, M +\frac{\eta_f}{2R}, R, \rho - \eta_C),$ where $\OPT(F,K) := \argmin\{F(x) : x \in K\cap Z\}$. Recall that $Z$ is assumed to be given and known by the algorithm. Since the optimal solution for $(f, C)$ is assumed to be in $C_{-\rho}$, and $K$ contains $C_{-\eta_C} \supseteq C_{-\rho}$, $K$ contains the optimal solution to to the true instance, $x^*: f(x*) = \OPT(f,C)$. Finally, using the guarantee $\|F - f\|_{\infty} \le 2\eta_f T$, we obtain that $f(\bar{x^*}) \le \OPT(f,C) + err(T, M +\frac{\eta_f}{2R}, R, \rho - \eta_C) + 4 \eta_f T$, concluding the proof of the theorem.
\end{proof}

\subsection{Computing convex-extensible separation responses}

We now prove Theorem \ref{thm:cvxExt}. The result requires the existence of a convex extension $K$ for the responses $\hat{r}_t$ that we construct, and we need $C_{-\eta} \subseteq K \subseteq C$. We provide a procedure that produces the responses $\hat{r}_1,\ldots,\hat{r}_T$ together with sets $K_1, \ldots, K_T$ so that $K_t \cap C$ is consistent with the responses up to this round, i.e., $\hat{r}_1,\ldots,\hat{r}_t$, and is sandwiched between $C_{-\eta}$ and $C$. The set $K_t$ will consist of all the points that were not excluded by the separating hyperplanes of the responses up to this round. Thus, our main task is to ensure that as $K_t$ evolves, it does not exclude the points $x_\tau$ that the responses up to now have reported as \feas (ensuring consistency with previous responses). We also want to ensure that none of the deep points $C_{-\eta}$ is excluded.



    Before stating the formal procedure, we give some intuition on how this is accomplished. Suppose one has $K_{t-1}$ satisfying the desired properties. One receives a new point $x_{t}$ and separation response $\tilde{r}_{t}$ from the approximate oracle, and we need to construct a response $\hat{r}_{t}$ and an updated set $K_{t}$ to maintain the desired properties. 

    Suppose $\tilde{r}_t$ reports that $x_t$ is $\feas$. Our procedure ignores this information, keeps $K_t = K_{t-1}$ and creates a response $\hat{r}_t$ that is \feas if and only if $x_t \in K_t = K_{t-1}$ (also sending a hyperplane separating $x_t$ from $K_t$ if $x_t \notin K_t$; notice that since this hyperplane does not cut into $K_{t-1}$, we do not need to update this set). Notice that $K_t \cap C$ is indeed consistent with the response $\hat{r}_t$.

    \remove{
    If  $x_{t} \in K_{t-1}$, then no adjustment is needed: set $\hat{r}_{t}= (\feas, \star)$, and keep $K_{t} = K_{t-1}$.
    If on the other hand $x_{t}\notin K_{t-1}$, we simply respond $\infeas$ and report a halfsapce separating $x_t$ from $K_{t-1}$, ignoring entirely the $\tilde{r}_{t}$ provided by the oracle, and set $K_{t}= K_{t-1}$. 
    (Notice that $K_t$ still contains $C_{-\eta}$, so the excluded point $x_t$ was ``almost infeasible'' for $C$ and intuitively its exclusion should be ok.) }

    The interesting case is when $\tilde{r}_t$ reports that $x_t$ is \infeas (so $x_t \notin C$) but $x_{t}\in K_{t-1}$. Thus, $x_t$ cannot belong to $K_t \cap C$ (recall we will construct $K_t \subseteq K_{t-1}$), and so to ensure consistency our response $\hat{r}_t$ needs to report \infeas and a separating hyperplane that excludes $x_t$. The first idea is to simply use separating hyperplane reported by the approximate oracle. But this can exclude points that were deemed \feas by our previous responses $\hat{r}_\tau$ (we call these points $\feasbar_{t-1}$), which would violate consistency. Thus, we first rotate this hyperplane as little as possible such that it contains all points in $\feasbar_{t-1}$, and report this rotated hyperplane $H$ (adding it to $K_{t-1}$ to obtain $K_t$). 
    While this rotation protects the points $\feasbar_{t-1}$, we also need to argue that it does not stray too much away from the original approximate separating hyperplane so as to not cut into $C_{-\eta}$.

    We now describe the formal procedure in detail. We use $H(g, \bar{x}) := \{x : \ip{g}{x} \le \ip{g}{\bar{x}}\}$ to denote the halfspace with normal $g$ passing through the point $\bar{x}$. 

	\begin{mdframed}
    \begin{proc} \label{proc:convExt}$\phantom{x}$

    \vspace{-2pt}
    Initialize $K_0 = \R^d$ and $\feasbar_0 = \emptyset$.
    For each $t=1,\ldots,T$:

    \vspace{-6pt}
    \begin{enumerate}[leftmargin=18pt]
        \item  Query the $\eta$-approximate feasibility oracle for $C$ at $x_t$, receiving the response $({flag}_t, \tilde{g}_t)$
         
        \item \vspace{-4pt} \label{item:constrFeas} If ${flag}_t = \feas$ and $x_t \in K_{t-1}$. Define the response $\hat{r}_t = (\feas, \star)$, and set $K_t = K_{t-1}$ and $\feasbar_t = \feasbar_{t-1}\cup {x_t}$.
        
        \item \vspace{-4pt} \label{item:constrEasy} ElseIf  ${flag}_t = \feas$ but $x_t \notin K_{t-1}$. Set $\hat{g}_t$ be any unit vector such that the halfspace $H(\hat{g}_t, x_t)$ contains $K_{t-1}$. Define the response $\hat{r}_t = (\infeas, \hat{g}_t)$. Set $K_t = K_{t-1}$ and  ${\feasbar_t = \feasbar_{t-1}}$.
        
        \item \vspace{-4pt} \label{item:constrProj} Else (so ${flag}_t = \infeas$). Let $\hat{g}_t$ be a unit vector such that the induced halfspace $H(\hat{g}_t, x_t)$ contains $\feasbar_{t-1}$ that is the closest to $\tilde{g}_t$ with this property, i.e.
        \begin{align*}
        	&\|\hat{g}_t - \tilde{g}_t\|_2 \\=& \min_{g \in B(1)} \bigg\{\|g - \tilde{g}_t\|_2 : H(g, x_t) \supseteq \feasbar_{t-1} \bigg\}.
        \end{align*}

       \vspace{-2pt}
       Define the response $\hat{r}_t = (\infeas, \hat{g}_t)$.
       Set $K_t = K_{t-1} \cap H(\hat{g}_t, x_t)$ and $\feasbar_t = \feasbar_{t-1}$.
    \end{enumerate}
		\end{proc}
	\end{mdframed}
	
	We remark that in Line \ref{item:constrProj}, there indeed exists a halfspace supported at $x_t$ that contains all points in $\feasbar_{t-1}$: in this case $x_t \notin C$ (since ${flag}_t = \infeas$) and by definition $\feasbar_{t-1} \subseteq C$, so any halfspace separating $x_t$ from $C$ will do.

	\begin{lemma} \label{lemma:KConvExt}
	For every $t = 1,\ldots,T$, the set $K_t\cap C$, with $K_t$ computed by Procedure \ref{proc:convExt}, is a convex extension to the responses $\hat{r}_1,\ldots,\hat{r}_t$. Moreover, all these sets satisfy $K_t \supseteq C_{-\eta}$.
	\end{lemma}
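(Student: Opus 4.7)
The plan is to prove both claims simultaneously by induction on $t$, with the trivial base case given by $K_0 = \R^d$, so that $K_0 \cap C = C \supseteq C_{-\eta}$ and there are no prior responses to match. In the inductive step I would analyze the three branches of Procedure~\ref{proc:convExt} separately, verifying in each that (a) $K_t \supseteq C_{-\eta}$, (b) $K_t \cap C$ is consistent with the new response $\hat{r}_t$, and (c) $K_t \cap C$ remains consistent with all earlier responses.

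Consistency with earlier responses follows uniformly across cases: since $K_t \subseteq K_{t-1}$, every halfspace used by a previous $\infeas$ response still contains $K_t$, and for previous $\feas$ responses the points in $\feasbar_\tau$ are preserved in $K_t$ because in Cases~\ref{item:constrFeas} and~\ref{item:constrEasy} the set does not shrink, while in Case~\ref{item:constrProj} the halfspace $H(\hat{g}_t, x_t)$ is explicitly chosen to contain $\feasbar_{t-1}$. Consistency with $\hat{r}_t$ itself is case-by-case but easy: in Case~\ref{item:constrFeas}, Definition~\ref{def:approxSep} guarantees $x_t \in C$ whenever the oracle returns $\feas$, and by hypothesis $x_t \in K_{t-1} = K_t$, so $x_t \in K_t \cap C$; in Case~\ref{item:constrEasy}, $x_t \in C \setminus K_{t-1}$, so $x_t \notin K_t \cap C$, and $\hat{g}_t$ is chosen so that $H(\hat{g}_t, x_t) \supseteq K_{t-1} = K_t$; in Case~\ref{item:constrProj}, the oracle returning $\infeas$ forces $x_t \notin C$, and $K_t \subseteq H(\hat{g}_t, x_t)$ makes $\hat{g}_t$ a valid separating direction.

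The inclusion $K_t \supseteq C_{-\eta}$ is immediate from induction in Cases~\ref{item:constrFeas} and~\ref{item:constrEasy} since $K_t = K_{t-1}$; the substantive step, and the main obstacle, is showing in Case~\ref{item:constrProj} that $C_{-\eta} \subseteq H(\hat{g}_t, x_t)$. The plan there is to combine the oracle's approximation guarantee, the near-best rotation that produces $\hat{g}_t$, and the $\eta$-depth of points in $C_{-\eta}$. Concretely, let $g$ be the unit vector from Definition~\ref{def:approxSep} with $\|\tilde{g}_t - g\| \le \eta/(4R)$ and $\ip{g}{x} \le \ip{g}{x_t}$ for all $x \in C$. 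Since $\feasbar_{t-1} \subseteq C$ by induction, $g$ is a feasible candidate in the minimization defining $\hat{g}_t$, so the optimality of $\hat{g}_t$ yields $\|\hat{g}_t - \tilde{g}_t\| \le \|g - \tilde{g}_t\| \le \eta/(4R)$, and by triangle inequality $\|\hat{g}_t - g\| \le \eta/(2R)$.

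To finish, fix any $y \in C_{-\eta}$. Since $B(y,\eta) \subseteq C$, the point $y + \eta g$ lies in $C$, so the valid cut from $g$ gives $\ip{g}{y + \eta g} \le \ip{g}{x_t}$, i.e., $\ip{g}{y - x_t} \le -\eta$. The perturbation is controlled by $|\ip{\hat{g}_t - g}{y - x_t}| \le \|\hat{g}_t - g\| \cdot \|y - x_t\| \le (\eta/(2R)) \cdot 2R = \eta$, using $y, x_t \in B(R)$. Adding these two estimates gives $\ip{\hat{g}_t}{y - x_t} \le 0$, i.e., $y \in H(\hat{g}_t, x_t)$, as required. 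This final computation is the heart of the argument: the $\eta/(4R)$ accuracy built into Definition~\ref{def:approxSep} is exactly what is needed to offset the $\eta$-depth of $y$ in $C$, and a weaker oracle or a shallower set would break the cancellation.
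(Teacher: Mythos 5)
Your proof is correct and follows the same inductive, case-by-case argument as the paper: reduce to the three branches of Procedure~\ref{proc:convExt}, note consistency is automatic since $K_t\subseteq K_{t-1}$ and $\feasbar_{t-1}\subseteq K_t$, and in Case~\ref{item:constrProj} use that the exact normal $g$ is a feasible candidate in the minimization to get $\|\hat{g}_t-\tilde{g}_t\|\le\eta/(4R)$ and hence $\|\hat{g}_t-g\|\le\eta/(2R)$. The only difference is that the paper leaves the final inclusion $C_{-\eta}\subseteq H(\hat{g}_t,x_t)$ as ``easy to see,'' whereas you spell out the short computation balancing the $\eta$-depth of $y$ against the $\eta/(2R)$ normal error over the radius-$2R$ domain.
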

	
	\begin{proof}
    It suffices to prove this for each iteration, so suppose $K_{t-1}$ with responses $\hat{r}_1, ..., \hat{r}_{t-1}$ satisfy the lemma. If Lines \ref{item:constrFeas} or \ref{item:constrEasy} of the procedure were executed, it is straightforward to see $K_t$ satisfies the lemma. If Line \ref{item:constrProj} executes, $x_t \notin C$, and so the response $\hat{r}_t$ is consistent with $K_t\cap C$ since $K_t = K_{t-1}\cap H(\hat{g}_t, x_t)$. As $\feasbar_{t-1}\subseteq K_t$ by construction and $K_t \subseteq K_{t-1}$, $K_t\cap C$ is consistent with all responses $\hat{r}_1, ..., \hat{r}_t$ made. 
    It remains to show that $K_t$ contains $C_{-\eta}$, for which showing $H(\hat{g}_t, x_t)$ contains it suffices. Notice that since  there exists an exact halfspace $H(g, x_t)$ separating $x_t \notin C$ that is $\frac{\eta}{4R}$-close to $\tilde{g}_t$ (due to the $\eta$-approximate oracle), and $H(g, x_t)$ contains $C$ and thus $\feasbar_{t-1}$, we have $\|\hat{g}_t - \tilde{g}_t\|_2 \leq \frac{\eta}{4R}$. The triangle inequality then reveals $\|\hat{g}_t - g\|_2 \leq \|\hat{g}_t - \tilde{g}_t\|_2+ \|\tilde{g}_t - g\|_2 \leq \frac{\eta}{2R}$, and then it is easy to see that $H(\hat{g}_t, x_t)$ contains $C_{-\eta}$, concluding the proof.
    \end{proof}

\section{A closer comparison with related work} \label{sec: comparison with devolder et al}
In this section, we give a more detailed comparison between our work and the results and settings in closely related work of \cite{Devolder_Nesterov2014_First_order_inexact_oracles}. Therein, the authors define a similar setting using their own notion of inexact first-order oracles and analyze the behavior of a few primal, dual and accelerated gradient methods for convex optimization problems under these oracles. In particular, they show that accelerated gradient methods must accumulate errors in the inexact information setting. As noted, their analysis gives guarantees for specific algorithms, as opposed to our ``universal'' algorithm-independent guarantees. In summary, for the algorithms they provide results for, implementing our method mostly requires less noise to achieve equivalent convergence rates. We begin by comparing the oracles used. 

\paragraph{Comparing the inexact oracles:} The notion of inexact first-order oracle in \cite{Devolder_Nesterov2014_First_order_inexact_oracles} uses two parameters, as opposed to the single $\eta$ parameter in our Definition \ref{def:approxFOO}. 
Nevertheless, an oracle from the Devolder et al. setting corresponds to an oracle in our setting for any family of convex functions on $B(R)$, with appropriate settings of the noise parameters, and vice versa.
We provide the definition for an inexact oracle used by Devolder et al. here: 

 \begin{definition}\label{def: devolder inexact oracle}
 Let $f$ be a convex function on $B(R)$. A first-order $(\delta, L)$-oracle queried at some point $y \in B(R)$ returns a pair $\left(\tilde{f}_y, \tilde{g}_y\right) \in \R \times \R^d$ such that for all $x \in B(R)$ we have
$$
\begin{aligned}
& \tilde{f}_y+\left\langle \tilde{g}_y, x-y\right\rangle \leq f(x) \\
& \leq \tilde{f}_y+\left\langle \tilde{g}_y, x-y\right\rangle+\frac{L}{2}\|x-y\|^2+\delta .
\end{aligned}
$$
\end{definition}


Note that while this definition is valid for any convex function, not just $L$-smooth functions, the motivation for the definition comes from the $L$-smooth inequalities. The parameter $\delta$ can be viewed as the ``noise" while $L$ is the smoothness constant of the family of functions considered, which is taken for granted throughout~\cite{Devolder_Nesterov2014_First_order_inexact_oracles}. We will refer to the oracle of Definition \ref{def:approxFOO} as the $\eta$-approximate oracle and that of Definition \ref{def: devolder inexact oracle} as the $(\delta, L)$-oracle in this section.

Here is what one can say when comparing the two oracles. For the family of $L$-smooth functions, an $\eta$-approximate oracle corresponds to a $(\delta, L)$-oracle with $\delta = 4\eta$ (see Example $b.$ from Section 2.3 in \cite{Devolder_Nesterov2014_First_order_inexact_oracles}). 
For $M$-Lipschitz functions, an $\eta$-approximate oracle is equivalent to a ($\delta$, $L$)-oracle obtained by setting $\delta = 3\eta + \frac{M^2}{L}$ and arbitrary $L > 0$ (if the response of the $\eta$-approximate oracle is $\hat f, \hat g$, the response of the $(\delta,L)$-oracle is $\tilde f = \hat f - \frac{3}{2}\eta$ and $\tilde g = \hat g$).
This follows from the Lipschitz bound $f(y) \leq f(x) + \langle \nabla f(x), y - x \rangle + 2 M \| y -x \|$ combined with the fact that $M r \leq \frac{L}{2} r^2 + \frac{M^2}{2 L}$ for any $r, L > 0$.

In the other direction, if one is given a $(\delta, L)$-oracle from the Devolder et al. setting for any family of convex functions on $B(R)$, this provides an $\eta$-approximate oracle in our sense for $\eta = \max (\delta, 2R\sqrt{2\delta L})$ (see eqn. (8) in \cite{Devolder_Nesterov2014_First_order_inexact_oracles}).

\paragraph{Comparison of the noise levels needed for convergence rates:} Next, we compare the noise levels needed to achieve comparable convergence rates across our approach and the results of Devolder et al. We look at three different function classes for this comparison. 

\begin{itemize}
    \item[a)] Nonsmooth, $M$-Lipschitz functions: As mentioned above, Devolder et al. do not explicitly do an analysis for this family; they focus on $L$-smooth functions. However, one can carry out an analysis of the subgradient method given an $\eta$-approximate oracle similar to the Devolder et al. analysis in the smooth case. The additional error incurred by the inexactness of the oracle is indeed still $O(\eta)$. Therefore, if one uses subgradient descent without any modifications, one needs to set $\eta = O(\frac{1}{T^{0.5}})$ to get overall error $O(\frac{1}{T^{0.5}})$ after $T$ iterations. Using our black-box reduction however, one needs to set $\eta = O(\frac{1}{T^{1.5}})$ for the same convergence rate.

\item[b)] Comparison for $L$-smooth functions with gradient descent: Devolder et al.'s analysis of gradient descent with their notion of $(\delta, L)$ oracle gives $O(\delta)$ additional error. Thus, they need to set $\delta = O(\frac{1}{T})$ to get $O(\frac{1}{T})$ convergence. From the discussion above, an $\eta$-approximate oracle corresponds to a $(\delta, L)$-oracle with $\delta = 4\eta$. In other words, Devolder et al.'s result says that $\eta = O(\frac{1}{T})$ suffices to get $O(\frac{1}{T})$ convergence, if gradient descent is run without modification with an $\eta$-approximate oracle. From our black-box analysis for the $L$-smooth case, we cannot guarantee a convergence rate better than $O(\frac{1}{\sqrt{T}})$, no matter what choice of $\eta$, since we lose the $\sqrt{T}$ factor due to our smoothing technique. Thus, our technique does not achieve the $O(\frac{1}{T})$ rate for gradient descent for $L$-smooth functions.

\item[c)] Comparison for $L$-smooth functions with acceleration: Devolder et al.'s analysis of Nesterov's acceleration with the $(\delta, L)$-oracle gives $O(\delta T)$ additional error. Thus, they need to set $\delta = O(\frac{1}{T^3})$ to get the standard $O(\frac{1}{T^2})$ convergence. From the discussion above, an $\eta$-approximate oracle in our setting corresponds to a $(\delta, L)$-oracle with $\delta = 4\eta$. In other words, Devolder et al.'s result says that $\eta = O(\frac{1}{T^3})$ suffices to get $O(\frac{1}{T^2})$ convergence. To get just $O(\frac{1}{T^{1.5}})$ rate, one can set $\eta = O(\frac{1}{T^{2.5}})$ in the analysis of Devolder et al. (the additional error term from the oracle noise dominates in this case). From our black-box analysis for $L$-smooth functions, we need to set $\eta = O(\frac{1}{T^{2.5}})$ as well, but note that we lose a factor of $\sqrt{d}$ ($d$ being the dimension) additionally. So our final rate is $O(\frac{\sqrt{d}}{T^{1.5}})$, whereas the Devolder et al. analysis does not accrue any dimension-dependent factors.
\end{itemize}

Overall, the algorithm-specific analyses in \cite{Devolder_Nesterov2014_First_order_inexact_oracles} give better convergence rates for equivalent noise levels, or equivalently have less stringent noise requirements to achieve a target convergence rate. This is not too surprising, since our black-box approach is much more general to work with \textit{any} first-order algorithm, and can be viewed as a kind of trade-off to the generality of our results. 


\end{document}